\newcommand{\E}[1]{\textbf{E} \left[#1\right]}
\newcommand{\set}[1]{\left\{#1\right\}}
\newcommand{\lfrac}[2]{\left(\frac{#1}{#2}\right)}
\def\cD{{\cal D}}
\def\cE{{\cal E}}
\def\cF{{\cal F}}
\def\hw{\hat{w}}
\def\A{{\cal A}}
\def\cF{{\cal F}}
\def\a{\alpha}
\def\b{\beta}
\def\d{\delta}
\def\e{\varepsilon}
\def\f{\phi}
\def\g{\gamma}
\def\G{\Gamma}
\def\z{\zeta}
\def\l{\lambda}
\def\m{\mu}
\def\n{\nu}
\def\p{\pi}
\def\r{\rho}
\def\s{\sigma}
\def\t{\tau}
\def\om{\omega}
\def\whp{{\bf whp}}
\newcommand\Prob[1]{{\mbox{Pr}\left\{#1\right\}}}
\def\Pr{{\bf Pr}}
\newtheorem{lemma}{Lemma}
\newtheorem{theorem}{Theorem}
\newtheorem{corollary}{Corollary}
\newcommand{\brac}[1]{\left( #1\right)}
\newcommand{\bfrac}[2]{\brac{\frac{#1}{#2}}}
\newtheorem{conjecture}{Conjecture}
\newcommand{\bfm}[1]{\mbox{\boldmath $#1$}}
\newcommand{\reals}{\mbox{\bfm{R}}}
\newcommand{\rdup}[1]{\lceil #1 \rceil}
\newcommand{\proofend}{\hspace*{\fill}\mbox{$\Box$}}
\newcommand{\beq}[2]{\begin{equation}\label{#1}#2\end{equation}}
\def\vG{\vec{\Gamma}}
\def\vX{\vec{X}}
\def\hw{\widehat{w}}
\def\hC{\widehat{C}}
\def\hM{\widehat{M}}
\def\A{{\boldsymbol \a}}
\title{Minimum-cost matching in a random graph with random costs}
\author{Alan Frieze\thanks{Research supported in part by NSF Grant  DMS1362785. Email: alan@random.math.cmu.edu}\ \ \  Tony Johansson\thanks{Research supported in part by NSF Grant  DMS1362785. Email: tjohanss@andrew.cmu.edu}\\ Department of Mathematical Sciences\\Carnegie Mellon University\\Pittsburgh PA 15213\\U.S.A.}
\begin{document}
\maketitle
\begin{abstract}
Let $G_{n,p}$ be the standard Erd\H{o}s-R\'enyi-Gilbert random graph and let $G_{n,n,p}$ be the random bipartite graph on $n+n$ vertices, where each $e\in [n]^2$ appears as an edge independently with probability $p$. For a graph $G=(V,E)$, suppose that each edge $e\in E$ is given an independent uniform exponential rate one cost. Let $C(G)$ denote the random variable equal to the length of the minimum cost perfect matching, assuming that $G$ contains at least one. We show that w.h.p. if $d=np\gg(\log n)^2$ then w.h.p. $\E{C(G_{n,n,p})} =(1+o(1))\frac{\p^2}{6p}$. This generalises the well-known result for the case $G=K_{n,n}$. We also show that w.h.p. $\E{C(G_{n,p})} =(1+o(1))\frac{\p^2}{12p}$ along with concentration results for both types of random graph. 
\end{abstract}
\section{Introduction}
There are many results concerning the optimal value of combinatorial optimization problems with random costs. Sometimes the costs are associated with $n$ points generated uniformly at random in the unit square $[0,1]^2$. In which case the most celebrated result is due to Beardwood, Halton and Hammersley \cite{BHH} who showed that the minimum length of a tour through the points a.s. grew as $\b n^{1/2}$ for some still unknown $\b$. For more on this and related topics see Steele \cite{S1}.

The optimisation problem in \cite{BHH} is defined by the distances between the points. So, it is defined by a random matrix where the entries are highly correlated. There have been many examples considered where the matrix of costs contains independent entries. Aside from the Travelling Salesperson Problem, the most studied problems in combinatorial optimization are perhaps, the shortest path problem; the minimum spanning tree problem and the matching problem. As a first example, consider the shortest path problem in the complete graph $K_n$ where the edge lengths are independent exponential random variables with rate 1. We denote the exponential random variable with rate $\l$ by $E(\l)$. Thus $\Pr(E(\l)\geq x)=e^{-\l x}$ for $x\in \reals$. Janson \cite{J99} proved (among other things) that if $X_{i,j}$ denotes the shortest distance between vertices $i,j$ in this model then $\E{X_{1,2}}=\frac{H_n}{n}$ where $H_n=\sum_{i=1}^n\frac{1}{i}$. 

As far as the spanning tree problem is concerned, the first relevant result is due to Frieze \cite{F85}. He showed that if the edges of the complete graph are given independent uniform $[0,1]$ edge weights, then the (random) minimum length of a spanning tree $L_n$ satisfies $\E{L_n}\to\z(3)=\sum_{i=1}^\infty\frac{1}{i^3}$ as $n\to\infty$. Further results on this question can be found in Steele \cite{Steele}, Janson \cite{J95}, Beveridge, Frieze and McDiarmid \cite{BFM98}, Frieze, Ruszinko and Thoma \cite{FRT00} and Cooper, Frieze, Ince, Janson and Spencer \cite{CFIJS}.

In the case of matchings, the nicest results concern the the minimum cost of a matching in a randomly edge-weighted copy of the complete bipartite graph $K_{n,n}$. If $C_n$ denotes the (random) minimum cost of a perfect matching when edges are given independent exponential $E(1)$ random variables then the story begins with Walkup \cite{W80a} who proved that $\E{C_n}\leq 3$. Later Karp \cite{K87} proved that $\E{C_n}\leq 2$. Aldous \cite{A92}, \cite{A01} proved that $\lim_{n\to\infty}\E{C_n}=\z(2)=\sum_{k=1}^\infty\frac{1}{k^2}$.
 Parisi \cite{P98} conjectured that in fact $\E{C_n}= \sum_{k=1}^{n}\frac{1}{k^2}$. This was proved independently by Linusson and W\"astlund \cite{LW04}
and by  Nair, Prabhakar and Sharma \cite{NPS05}. A short elegant proof was given by W\"astlund \cite{W1}, \cite{W2}.

In the paper \cite{BFM98} on the minimum spanning tree problem, the complete graph was replaced by a $d$-regular graph $G$. Under some mild expansion assumptions, it was shown that if $d\to\infty$ then $\z(3)$ can be replaced asymptotically by $n\z(3)/d$. 

Now consider a $d$-regular bipartite graph $G$ on $2N$ vertices. Here $d = d(N) \rightarrow \infty$ as $N \rightarrow \infty$. Each edge $e$ is assigned a cost $w(e)$, each independently chosen according to the exponential distribution $E(1)$. Denote the total cost of the minimum-cost perfect matching by $C(G)$.

We conjecture the following (under some possibly mild restrictions):
\begin{conjecture}\label{conj1}
Suppose $d = d(N) \rightarrow \infty$ as $N \rightarrow \infty$. For any $d$-regular bipartite $G$,
$$\E{C(G)} =(1+o(1))\frac{N}{d} \frac{\p^2}{6}.$$
Here the $o(1)$ term goes to zero as $N\to\infty$.
\end{conjecture}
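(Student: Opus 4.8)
The plan is to establish matching asymptotic upper and lower bounds on $\E{C(G)}$, both driven by the principle that, once all weights are rescaled by $d$, a $d$-regular bipartite graph looks locally like $K_{N,N}$, for which Aldous~\cite{A01} --- and, with the sharp constant, W\"astlund~\cite{W1,W2} --- gives the value $\zeta(2)=\pi^2/6$. The first step is a local-structure lemma: if $G=G(N)$ is $d$-regular bipartite with $d=d(N)\to\infty$ and each edge carries an independent $E(1)$ weight, then, with every weight multiplied by $d$, $G$ converges in the local weak sense to the Poisson weighted infinite tree. For fixed $A$, the ``light'' subgraph $G_{\le A/d}$ of edges with original weight at most $A/d$ retains each edge independently with probability $1-e^{-A/d}=(A+o(1))/d$, so a fixed light-neighbourhood of a vertex explores like a Galton--Watson tree with $\mathrm{Poisson}(A)$ offspring; and because $G$ is $d$-regular there are at most $d^{2k-1}$ cycles of length $2k$ through a given vertex, so the expected number of all-light cycles of any fixed length through a fixed light-neighbourhood is $O(A^{2k}/d)\to0$ and that neighbourhood is a tree w.h.p. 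Letting $A\to\infty$ after $d\to\infty$ yields the PWIT. Crucially this uses only $d$-regularity, no girth or expansion hypothesis --- regularity is exactly what suppresses short light cycles.

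The second and main step is to upgrade this to $\frac{d}{N}\,C(G)\to\pi^2/6$, i.e.\ to transfer the local picture to the global optimum, following the objective-method / recursive-distributional-equation analysis behind the $K_{N,N}$ result. For the upper bound one builds an explicit matching by thresholds $t_1<t_2<\cdots$: in phase $j$ one matches most currently unmatched vertices through $G_{\le t_j}$, and the resulting cost, essentially $\sum_j t_j\,(n_{j-1}-n_j)$ with $n_j$ the number of vertices still unmatched, can be arranged to equal $(1+o(1))\frac{N\pi^2}{6d}$ provided the $n_j$ decay fast enough; this is again where $d$-regularity enters, through a defect-Hall estimate showing $G_{\le c/d}$ has a matching missing at most $e^{-\Omega(c)}N$ vertices (the point being that a concavity/rearrangement argument forces the $G_{\le c/d}$-neighbourhood of every set $S$ to have expected size at least $|S|(1-e^{-c})$, so no deficient set is large). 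For the lower bound one must show no matching can beat the local prediction: dual vertex-potential certificates extracted from the PWIT fixed point should give $C(G)\ge(1-o(1))\frac{N\pi^2}{6d}$. The crude bound $C(G)\ge\half\sum_{v}\min_{e\ni v}w(e)$, whose expectation is only $\frac{N}{d}$, falls short of $\frac{\pi^2}{6}\cdot\frac{N}{d}$ precisely because it ignores the competition between adjacent vertices for the same light edge, and capturing this competition is the substance of the lower bound.

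The third step is bookkeeping to pass from a w.h.p.\ statement to the expectation: one needs concentration of $\frac{d}{N}C(G)$ and uniform integrability of its upper tail. Since $C(G)$ is $1$-Lipschitz in each of the $Nd$ edge weights and only weights of order $1/d$ matter for the optimum, a self-bounding / Talagrand-type argument should give exponential concentration of $C(G)$ about its median, which controls the upper tail; this parallels the concentration results the paper already proves for $G_{n,n,p}$ and $G_{n,p}$.

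The hard part is the second step, and specifically the \emph{double} limit $N\to\infty$, $d\to\infty$: the objective method is packaged for a single sequence of finite graphs with a \emph{fixed} edge-weight law converging locally weakly, whereas here the rescaling depends on $N$, so no off-the-shelf theorem applies. What is really needed is a quantitative, uniform ``local determines global'' principle for the PWIT matching problem --- a rate at which $\frac{d}{N}C(G)$ relaxes to $\pi^2/6$ that is uniform over all $d$-regular bipartite $G$ --- and above all a uniform lower bound, since one must rule out pathological $d$-regular hosts with many short (though, by the first step, necessarily not light) cycles. Producing such a uniform rate, rather than any one ingredient, is the gap that keeps this a conjecture; for $G_{n,n,p}$ with $d\gg(\log n)^2$ the analogous difficulty is essentially circumvented by coupling with $K_{n,n}$, a luxury unavailable for a general regular host, and it is plausible that the fully general statement needs a mild extra hypothesis of the kind alluded to in the conjecture, as in the minimum spanning tree result of Beveridge, Frieze and McDiarmid~\cite{BFM98}.
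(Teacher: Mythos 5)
You have correctly recognized that this is a conjecture, and your write-up is not a proof but a proposed programme together with an honest diagnosis of where it breaks down; that diagnosis is essentially right, so there is no gap you have overlooked so much as a gap you have identified. To be clear, the paper \emph{does not} prove Conjecture~\ref{conj1}: it proves only the random instances, Theorems~\ref{th1} and~\ref{th2}, for $G_{n,n,p}$ and $G_{n,p}$ with $d=np=\om(\log n)^2$. So there is no ``paper's own proof'' of this statement to compare against.

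That said, the route you sketch (local weak convergence to the PWIT after rescaling by $d$, then an objective-method / RDE transfer, then concentration) is genuinely different from what the paper does for the cases it \emph{can} handle. The paper follows W\"astlund's elementary approach: adjoin an auxiliary vertex $b_{n+1}$ (resp.\ $v_{n+1}$) with $E(\l)$ edges, relate $\E{C(n,r)-C(n,r-1)}$ to $P(n,r)=\lim_{\l\to 0}\l^{-1}\Pr(b_{n+1}\in B_r^*)$ (Lemma~\ref{lem10a}/\ref{lem10}), estimate $P(n,r)$ by conditioning and the memoryless property of exponentials, and telescope; the awkward top range $r>n-m$ is killed by showing cheap short augmenting paths exist w.h.p.\ (Lemmas~\ref{cl1}, \ref{shortpaths}). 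This is not a coupling with $K_{n,n}$, contrary to your guess in the last paragraph. The W\"astlund route leans hard on two features your general $d$-regular host lacks: (i) $B_r$ is a uniformly random $r$-subset (Lemma~\ref{lem1}), so that $d_r(v)\approx (n-r)p$ uniformly; and (ii) random-graph expansion, which gives the $O(\log n)$-length low-weight alternating paths needed to finish the matching. Your objective-method route sidesteps (i), which is its main attraction, but (ii) reappears in your own plan: your threshold-matching upper bound still needs to \emph{complete} an almost-perfect matching cheaply, and your defect-Hall estimate controls only the expected neighbourhood size, not the worst case over all $S$ simultaneously, which for an adversarial $d$-regular $G$ is exactly where a mild expansion or girth hypothesis would have to enter. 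So in addition to the lower-bound obstruction you name (the lack of a uniform ``local determines global'' rate for the double limit $N\to\infty$, $d\to\infty$), the upper bound is also not closed without some further hypothesis, a point worth stating explicitly alongside the reference to \cite{BFM98}.
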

In this paper we prove the conjecture for random graphs and random bipartite graphs. Let $G_{n,n,p}$ be the random bipartite graph on $n+n$ vertices, where each $e\in [n]^2$ appears as an edge independently with probability $p$. Suppose that each edge $e$ is given an independent uniform exponential rate one cost. 
\begin{theorem}\label{th1}
If $d=np=\om(\log n)^2$ where $\om\to\infty$ then w.h.p. $\E{C(G_{n, n, p})}\approx\frac{\p^2}{6p}$.
\end{theorem}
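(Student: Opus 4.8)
The plan is to establish matching asymptotic lower and upper bounds on $\E{C(G_{n,n,p})}$, in each case by comparison with $K_{n,n}$, for which it is known that $\E{C(K_{n,n})}=\sum_{k=1}^{n}k^{-2}\to\z(2)=\frac{\p^2}{6}$. Throughout I would regard $G_{n,n,p}$ as the subgraph of $K_{n,n}$ obtained by deleting each edge independently with probability $1-p$, and let $Z_e\in[0,\infty]$ denote the cost of $e\in[n]^2$, with $Z_e\sim E(1)$ if $e$ survives and $Z_e=\infty$ otherwise, so that $\Prob{Z_e\le t}=p(1-e^{-t})$ for $t\ge0$. Note that $d=np\gg(\log n)^2$ comfortably exceeds the threshold $\log n$ for $G_{n,n,p}$ to contain a perfect matching; in fact $G_{n,n,p}$ fails to contain one only with probability $n^{-\om(1)}$.

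\textbf{Lower bound.} For all $t\ge0$ one has $p(1-e^{-t})\le 1-e^{-pt}$ (both sides vanish at $t=0$, and the left side has the smaller derivative there and throughout), so $Z_e$ stochastically dominates an $E(p)$ variable. Coupling independently over edges, we may assume $Z_e\ge Y_e$ with the $Y_e\sim E(p)$ i.i.d. When $G_{n,n,p}$ has a perfect matching, its optimal matching $M$ is also a perfect matching of $K_{n,n}$, so $C(G_{n,n,p})=\sum_{e\in M}Z_e\ge\sum_{e\in M}Y_e$, which is at least the minimum cost of a perfect matching of $K_{n,n}$ under the costs $(Y_e)$. Rescaling by $p$, the latter has the distribution of $p^{-1}C(K_{n,n})$ and hence expectation $p^{-1}\sum_{k\le n}k^{-2}$. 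A short second-moment estimate shows the $n^{-\om(1)}$ event that $G_{n,n,p}$ has no perfect matching does not affect the expectation, giving $\E{C(G_{n,n,p})}\ge(1-o(1))\frac{\p^2}{6p}$.

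\textbf{Upper bound.} I would realise the deletion coupling through i.i.d.\ uniforms $U_e$ on $[0,1]$, setting $w_e=-\ln(1-U_e)\sim E(1)$ and $Z_e=-\ln(1-U_e/p)$ if $U_e<p$ and $Z_e=\infty$ otherwise; then $(w_e)$ are the edge costs of an auxiliary copy of $K_{n,n}$, $(Z_e)$ has the correct law, and both are increasing in $U_e$. Let $M^*$ be the optimal assignment for $(w_e)$. Using the known fact that w.h.p.\ every edge used by the optimal assignment of $K_{n,n}$ has cost $O(\log n/n)$ — with exceptional probability $n^{-\om(1)}$ — and since $-\ln(1-p)\ge p=d/n\gg\log n/n$, w.h.p.\ every $e\in M^*$ has $w_e<-\ln(1-p)$, i.e.\ $U_e<p$, so $M^*\subseteq G_{n,n,p}$. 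On this event each $e\in M^*$ has $U_e/p=O(\log n/d)=o(1)$, so the expansion of $-\ln(1-x)$ gives $Z_e=\bigl(1+O(\log n/d)\bigr)w_e/p$ uniformly over $e\in M^*$, whence $C(G_{n,n,p})\le\sum_{e\in M^*}Z_e=(1+o(1))\,p^{-1}C(K_{n,n})$ there. Taking expectations and bounding the complementary $n^{-\om(1)}$ event by a crude perfect matching — say one using only edges of cost at most $(\log\log n)(\log n)/d$, whose existence w.h.p.\ again needs only $d\gg\log n$, together with the $O(n^2)$ second moment of the cost of a fixed matching — yields $\E{C(G_{n,n,p})}\le(1+o(1))\,p^{-1}\E{C(K_{n,n})}=(1+o(1))\frac{\p^2}{6p}$.

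\textbf{The main difficulty.} The two comparisons themselves are short; the work lies in controlling $\E{C(G_{n,n,p})}$ on the low-probability events where the couplings break down — where $M^*\not\subseteq G_{n,n,p}$, or $G_{n,n,p}$ has no perfect matching — so that they contribute $o(1/p)$ rather than corrupting the estimate. This requires (i) an a priori bound $C(G_{n,n,p})=O(\mathrm{polylog}(n)/p)$ holding with probability $1-n^{-\om(1)}$, proved by showing the graph of cheap edges lies well above the perfect-matching threshold, and (ii) a sufficiently strong tail bound for the longest edge of the optimal assignment of $K_{n,n}$. Reconciling ``w.h.p.'' statements with control of the full expectation, while simultaneously forcing $O(\log n/d)=o(1)$ and keeping all failure probabilities super-polynomially small, is exactly where the hypothesis $d=\om((\log n)^2)$ — rather than merely $d\gg\log n$ — earns its keep. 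Finally, the concentration statements in the abstract follow from a standard bounded-differences or Talagrand argument applied to a suitably truncated version of $C(G_{n,n,p})$.
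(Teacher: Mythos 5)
Your proposal takes a genuinely different route from the paper. The paper adapts W\"astlund's argument directly to $G_{n,n,p}$: it adds a phantom vertex $b_{n+1}$ with $E(\l)$ edges, relates $\E{C(n,r)-C(n,r-1)}$ to the probability that $b_{n+1}$ is covered (Lemma \ref{lem10a}), computes that probability via the memorylessness of the exponential together with degree concentration in the evolving bipartite graph (Lemmas \ref{lem1}--\ref{lem2}, \ref{lem3}, Corollary \ref{corollary1}), sums the telescoping increments, and handles the last $m$ steps by showing augmenting paths have weight $O(\log n/np)$ (Lemmas \ref{cl1}, \ref{lemFG}, \ref{shortpaths}). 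You instead sandwich $C(G_{n,n,p})$ between two rescaled copies of $C(K_{n,n})$ via monotone couplings: the lower bound by the clean stochastic domination $p(1-e^{-t})\le 1-e^{-pt}$, and the upper bound by realizing $w_e$ and $Z_e$ as increasing functions of a common uniform and observing that the $K_{n,n}$-optimal matching survives in $G_{n,n,p}$ once its edges are known to cost $O(\log n/n)\ll p$. This is shorter and imports the known value $\E{C(K_{n,n})}\to\z(2)$ rather than re-deriving it. What it does not avoid is the paper's real technical work: the ``known fact'' you invoke --- that the optimum of $K_{n,n}$ uses only edges of cost $O(\log n/n)$, with failure probability polynomially small of arbitrarily high order --- is essentially what the paper establishes (for $G_{n,n,p}$, and implicitly for $K_{n,n}$ by taking $p=1$) in Lemmas \ref{cl1}, \ref{lemFG} and \ref{shortpaths}. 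You would need to either locate a citation with the correct tail strength (``w.h.p.'' is not enough, since you integrate against the cost on the bad event) or reprove it, in which case the savings shrink considerably. Two smaller points worth flagging: (i) the theorem as stated asserts that \emph{w.h.p.\ over the graph} the conditional expectation $\E{C(G)\mid G}$ is close to $\pi^2/(6p)$, whereas your argument directly bounds the unconditional $\E{C(G_{n,n,p})}$; closing that gap requires either running your couplings conditionally on a good graph event or going through the Talagrand concentration you mention, which needs to be done explicitly; and (ii) $n^{-\om(1)}$ overstates the tail you can actually get from the longest-edge fact (it is $n^{-K}$ for any fixed $K$), though $n^{-K}$ for $K\ge 3$ is all your bad-event integrals require.
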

Here the statement is that for almost all graphs $G=G_{n,n,p}$ we have $\E{C(G)}\approx \frac{\p^2}{6p}$. We will in fact show that in this case $C(G)$ will be highly concentrated around $\frac{\p^2}{6p}$.

Here $A_n\approx B_n$ iff $A_n=(1+o(1))B_n$ as $n\to \infty$ and the event $\cE_n$ occurs with high probability (w.h.p.) if $\Pr(\cE_n)=1-o(1)$ as $n\to \infty$. 

In the case of $G_{n, p}$ we prove
\begin{theorem}\label{th2}
If $d = np = \omega(\log n)^2$ where $\om\to\infty$ then w.h.p. $\E{C(G_{n, p})} \approx\frac{\pi^2}{12p}$.
\end{theorem}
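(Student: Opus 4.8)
The plan is to follow the proof of Theorem~\ref{th1} with the complete bipartite graph replaced throughout by the complete graph, and with the role of $K_{n,n}$ played by $K_n$. The constant changes from $\pi^2/6$ to $\pi^2/12$ for two equivalent reasons: a perfect matching of $G_{n,p}$ has $n/2$ edges whereas one of $G_{n,n,p}$ has $n$, and (the fact we actually invoke) the assignment limit $\E{C(K_{n,n})}\to\zeta(2)=\pi^2/6$ is replaced by the known minimum perfect matching limit $\E{C(K_n)}\to\pi^2/12$ (note $\sum_{k\ge1}(-1)^{k-1}k^{-2}=\pi^2/12$). As for Theorem~\ref{th1} we prove three things: that w.h.p.\ over the choice of $G=G_{n,p}$ one has an upper bound $\E{C(G)}\le(1+o(1))\pi^2/(12p)$, a matching lower bound, and concentration of $C(G)$ about $\pi^2/(12p)$; throughout $n$ is even so that a perfect matching can exist.

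The heart of the argument is a \emph{localisation} step. With $\kappa:=K\log n/d$ for a suitably large constant $K$, let $H$ be the subgraph of $G_{n,p}$ formed by the edges of cost at most $\kappa$. One shows that w.h.p.\ $H$ has a perfect matching and that no near-optimal perfect matching of $G_{n,p}$ uses an edge of cost much larger than $\kappa$, so that $C(G_{n,p})=(1+o(1))C(H)$ w.h.p. Now an edge of $G_{n,p}$ lies in $H$ with probability $\approx p\kappa=K\log n/n$, and given this its cost is an $\mathrm{Exp}(1)$ variable conditioned to lie in $[0,\kappa]$, which for $\kappa=o(1)$ is $\kappa(1+o(1))$ times a uniform on $[0,1]$. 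Hence $H$, with every cost multiplied by $p$, is distributed (up to the negligible truncated-exponential-versus-uniform discrepancy) like the corresponding cheap subgraph of $K_n$, namely the edges of $K_n$ of cost at most $K\log n/n$. Equivalently one may use the exact coupling in which $G_{n,p}$ with $\mathrm{Exp}(1)$ costs is obtained from $K_n$ with i.i.d.\ $\mathrm{Exp}(1)$ costs $(X_e)$ by retaining the edges with $X_e\ge\ln(1/p)$ and resetting their costs to $X_e-\ln(1/p)$; the window $[\ln(1/p),\ln(1/p)+\kappa]$ then plays exactly the role of $[0,K\log n/n]$. Either way $p\,\E{C(G_{n,p})}=(1+o(1))\E{C(K_n)}\to\pi^2/12$, and the same comparison delivers the lower bound (alternatively the lower bound can be obtained from the degree-constrained LP relaxation of the matching problem by exhibiting a near-optimal feasible dual solution supported on the cheap edges).

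Concentration is handled as in Theorem~\ref{th1}: on the typical event that $H$ has a perfect matching and that the optimal matching uses only edges of cost $O(\kappa)$, the quantity $C(G_{n,p})$ is a function of the independent edge-presence indicators and edge costs whose one-coordinate differences are $O(\kappa)=o(1/(p\log n))$, so a bounded-differences inequality together with a truncation of the $\mathrm{Exp}(1)$ tails yields concentration on a scale $o(1/p)$. The step I expect to be the main obstacle is the localisation, because in the non-bipartite setting the structural guarantees underpinning it change from Hall-type deficiency conditions to Tutte--Berge-type odd-component conditions: one must show, by a union bound over vertex subsets $S$, that w.h.p.\ there is no $S$ whose removal leaves many odd components of $H$ that could only be repaired by expensive edges, and one must track parity of components when rerouting an expensive edge out of the optimum. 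This is precisely where the hypothesis $d=\omega(\log n)^2$ is used — it is what gives enough slack in these union bounds and in the concentration estimates — exactly as in the proof of Theorem~\ref{th1}.
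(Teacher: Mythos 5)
Your approach — localize to the cheap subgraph $H$, couple $H$ with the cheap subgraph of $K_n$, and invoke the known limit $\E{C(K_n)}\to\pi^2/12$ — is genuinely different from the paper's, which re-derives the constant by adapting W\"astlund's machinery to $G_{n,p}$: add a special vertex $v_{n+1}$, prove the identity $\E{C(n,r)-C(n-1,r-1)}=\frac1n P(n,r)$ (Lemma \ref{lem10}), bound $P(n,r)$ from below and above by an algorithmic exposure process on a growing set $A_s$, sum the resulting telescope to get $\pi^2/12$, and finish the last $m$ steps via an alternating-diameter argument (Lemmas \ref{lem13}, \ref{altdiam}). Unfortunately your route has gaps that go beyond the localization step you flag.

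First, the ``exact coupling'' you describe is not the right one: retaining $K_n$-edges with $X_e\ge\ln(1/p)$ puts the cheap edges of $G_{n,p}$ in correspondence with the $K_n$-cost window $[\ln(1/p),\ln(1/p)+\kappa]$, which is a slice in the middle of the exponential, not the cheapest slice $[0,K\log n/n]$ where the optimal $K_n$-matching lives; this coupling therefore does not relate $C(G_{n,p})$ to $C(K_n)$ (the distributional-approximation version you sketch first is the correct idea, but it is not the same as this coupling and needs quantification). Second, and more fundamentally, the theorem asserts that $\E{C(G)\mid G}\approx\pi^2/(12p)$ for almost every realization of $G=G_{n,p}$ — see the remark after Theorem \ref{th1} — not merely that the unconditional expectation converges. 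A coupling of the joint (graph, cost) randomness can only give the unconditional statement, and upgrading it to a w.h.p.-over-$G$ statement requires concentration of the \emph{conditional} expectation. Your bounded-differences estimate does not supply this: after truncating costs at $\kappa=\Theta(\log n/d)$ there are $\Theta(n^2p)$ relevant coordinates, each with Lipschitz constant $O(\kappa)$, so $\sum_ic_i^2=\Theta\bigl((\log n)^2/p\bigr)$ and McDiarmid gives deviations of order $(\log n)/\sqrt p$, which is \emph{not} $o(1/p)$ once $p\gg(\log n)^{-2}$; in particular the argument fails exactly in the dense regime the theorem is supposed to cover. The paper instead uses Talagrand's inequality (Theorem \ref{thtal}), which exploits that a matching supports only $n/2$ coordinates and so satisfies $\sigma^2\le n\kappa^2=O\bigl((\log n)^2/(np^2)\bigr)$, giving deviations $o(1/p)$ uniformly in $p$. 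Without either Talagrand or the paper's direct conditional-expectation computation, your argument does not close.
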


Applying results of Talagrand \cite{tal} we can prove the following concentration result.
\begin{theorem}\label{thtal}
Let $\e>0$ be fixed, then
$$
\Pr\brac{\left|C(G_{n, n, p})-\frac{\p^2}{6p}\right|\geq \frac{\e}{p}}\leq n^{-K}, \quad \Pr\brac{\left|C(G_{n, p})-\frac{\p^2}{12p}\right|\geq \frac{\e}{p}}\leq n^{-K}
$$
for any constant $K>0$ and $n$ large enough.
\end{theorem}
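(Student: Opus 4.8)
The plan is to condition on the random graph and then apply Talagrand's inequality to the randomness in the costs; since the costs are independent of the graph this is legitimate, and the argument for $G_{n,p}$ is entirely analogous (using Theorem~\ref{th2}), so I describe only $G_{n,n,p}$. Fix the target exponent $K$; it is enough to prove every bound below with $K+10$ in place of $K$, since there are only finitely many error terms. Let $\cG$ be the event that $G=G_{n,n,p}$ has minimum degree at least $d/2$, contains a perfect matching, and --- by the proof of Theorem~\ref{th1} --- satisfies $\Ex_w[C(G)]=(1+o(1))\frac{\p^2}{6p}$; since $d=\om(\log n)^2$, every failure mode here has super-polynomially small probability, so $\Pr(G\notin\cG)\le n^{-K-10}$. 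Fixing $G_0\in\cG$, the random variable $C(G_0)$ becomes a function of the independent rate-one exponentials $\{w(e):e\in E(G_0)\}$, and it suffices to bound $\Pr_w(|C(G_0)-\frac{\p^2}{6p}|\ge\frac{\e}{p})$ uniformly over such $G_0$.

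The exponentials are unbounded, so $C(G_0)(w)=\min_M\sum_{e\in M}w(e)$ is not bounded-Lipschitz and I would truncate at level $\Lambda:=(\log n)^3/d$: let $\hw(e)=\min(w(e),\Lambda)$ and $\hC=\hC(w)=\min_M\sum_{e\in M}\hw(e)$. Then $\hC\le C$, $\hC\le n\Lambda$, changing one $w(e)$ moves $\hC$ by at most $\Lambda$, and $\{\hC\le s\}$ is certified by the $n$ coordinates of an optimal truncated matching. The first main step is a structural lemma: with probability $1-n^{-\om(1)}$ one has $C(G_0)=\hC$. I would prove it by alternating breadth-first search in the ``cheap'' subgraph $H$ of edges with $w(e)\le\tau:=\Lambda\log\log n/\log n$. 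Whp every vertex has $\Theta((\log n)^2\log\log n)\gg\log n$ neighbours in $H$, and a random graph of this degree is a strong vertex expander, robustly so after adding any near-perfect matching; hence whp, for every near-perfect matching $N$ of $G_0$ and every pair $u,v$ of $N$-exposed vertices, there is an $N$-alternating $u$--$v$ path with $O(\log n/\log\log n)$ non-$N$ edges, all lying in $H$, hence of total cost at most $\tfrac12\Lambda$. If some optimal matching $M$ (truncated or not) used an edge $e_0$ of cost exceeding $\Lambda$, then augmenting $M-e_0$ along such a path between the endpoints of $e_0$ would produce a strictly cheaper matching, a contradiction; so whp no optimal matching uses an edge of cost $>\Lambda$, which forces $\hC=C$.

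Given the lemma I would apply Talagrand's inequality \cite{tal}, in the form in which $\{h\ge s\}$ is certified by at most $n$ coordinates for every $s$ (exactly as in the classical analysis of the random assignment problem), to $h:=n\Lambda-\hC\ge0$, which is $\Lambda$-Lipschitz with $n$-coordinate certificates. This gives $\Pr_w(|\hC-\Ex_w\hC|>t+60\Lambda\sqrt n)\le4e^{-t^2/(8\Lambda^2 n)}$ for $0\le t\le\Ex_w h$. Taking $t=\frac{\e}{4p}=\frac{\e n}{4d}$: the slack $60\Lambda\sqrt n=60(\log n)^3 n^{1/2}/d$ is $o(n/d)$ and the exponent is of order $\e^2n/(\log n)^6\gg\log n$, so $\Pr_w(|\hC-\Ex_w\hC|>\frac{\e}{2p})\le4n^{-K-10}$ for $n$ large. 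To centre $\hC$, note $0\le\Ex_w(C-\hC)=\Ex_w[(C-\hC)\,\mathbf 1_{\{C\ne\hC\}}]\le(\Ex_w C^2)^{1/2}\Pr_w(C\ne\hC)^{1/2}\le O(n)\cdot n^{-\om(1)}$, using $\Ex_w C^2\le\Ex_w\big(\sum_{e\in M_0}w(e)\big)^2=O(n^2)$ for a fixed perfect matching $M_0$ of $G_0$; with $\Ex_w C=(1+o(1))\frac{\p^2}{6p}$ this yields $|\Ex_w\hC-\frac{\p^2}{6p}|\le\frac{\e}{2p}$ for $n$ large. Combining, $\Pr_w(|C(G_0)-\frac{\p^2}{6p}|\ge\frac{\e}{p})\le\Pr_w(C\ne\hC)+\Pr_w(|\hC-\Ex_w\hC|>\frac{\e}{2p})\le n^{-\om(1)}+4n^{-K-10}$, and un-conditioning over $\cG$ finishes the proof.

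I expect the structural lemma to be the main obstacle. The naive estimate $C-\hC\le\sum_{e\in M^{*}:\,w(e)>\Lambda}(w(e)-\Lambda)$ is useless, since it permits $\Omega(n/(\log n)^3)$ capped edges and hence an error of order $n/(\log n)^3\gg n/d$; one really must show the optimal matching carries \emph{no} capped edge, and this is where the expansion of $H$ and the calibration of $\Lambda$ are essential --- $\Lambda$ has to be small enough that $\Lambda\sqrt{n\log n}\ll 1/p$ so that Talagrand is strong enough, yet a fixed power of $\log n$ larger than $1/d$ so that it provably exceeds the heaviest edge of the optimal matching. One should also check that the exceptional set in Theorem~\ref{th1} can be taken of probability $n^{-\om(1)}$, which should follow from its proof since all bad events for $d=\om(\log n)^2$ are super-polynomially unlikely.
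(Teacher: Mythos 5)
Your proposal is correct and follows essentially the same route as the paper: truncate the exponential costs at $\Theta(\mathrm{polylog}(n)/d)$, show via low-cost alternating paths (the paper simply reuses Lemma~\ref{shortpaths}, proved earlier with failure probability $O(n^{-K})$ for any fixed $K$) that the optimal matching is unaffected by the truncation, and apply Talagrand's inequality to the resulting bounded, Lipschitz cost. The only cosmetic differences are your use of the certificate/mean form of Talagrand's inequality instead of the paper's median form \eqref{taly}, and a truncation threshold of $(\log n)^3/d$ rather than the paper's $c_2\log n/(np)$; both choices are harmless.
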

\section{Proof of Theorem \ref{th1}}\label{pot1}
We find that the proofs in \cite{W1}, \cite{W2} can be adapted to our current situation. Suppose that the vertices of $ G=G_{n,n,p}$ are denoted $A=\set{a_i,i\in [n]}$ and $B=\set{b_j,j\in [n]}$. Let $C(n,r)$ denote the cost of the minimum cost matching 
$$M_r=\set{(a_i,\f_r(a_i)):i=1,2,\ldots,r}\text{ of }A_r=\set{a_1,a_2,\ldots,a_r}\text{ into }B.$$ 
We will prove that w.h.p. 
\beq{eq1}{
\E{C(n,r)-C(n,r-1)}\approx \frac1{p}\sum_{i=0}^{r-1}\frac{1}{r(n-i)}.
}
for $r=1,2,\ldots,n-m$ where
$$m=\bfrac{n}{\om^{1/2}\log n}.$$

Using this we argue that w.h.p.
\beq{eq2}{
\E{C(G)}=\E{C(n,n)}=\E{C(n,n)-C(n,n-m+1)}+\frac{1+o(1)}{p} \sum_{r=1}^{n-m}\sum_{i=0}^{r-1}\frac{1}{r(n-i)}.
}
We will then show that 
\begin{align}
\sum_{r=1}^{n-m}\sum_{i=0}^{r-1}\frac{1}{r(n-i)}&\approx\sum_{k=1}^\infty \frac{1}{k^2}=\frac{\p^2}{6}.\label{sum=}\\
\E{C(n,n)-C(n,n-m+1)}&=o(p^{-1})\quad w.h.p.\label{Cnm=}
\end{align}
Theorem \ref{th1} follows from these two statements.
\subsection{Outline of the proof}
We first argue (Lemma \ref{lem1}) that $B_r = \f(A_r)$ is a uniformly random set. This enables us to show (Lemma \ref{lem2}) that w.h.p. vertices $v\in A_r$ have aproximately $(n-r)p$ neighbors in $B\setminus B_r$.  
Then comes the beautiful idea of adding a vertex $b_{n+1}$ and joining it to every vertex in $A$ by an edge of cost $E(\l)$. The heart of the proof is in Lemma \ref{lem10a} that relates $\E{C(n, r) - C(n, r-1)}$ in a precise way to the probability that $b_{n+1}$ is covered by $M_r^*$, the minimum cost matching of $A_r$ into $B^*=B\cup\set{b_{n+1}}$. The proof now focuses on estimating this probability $P(n,r)$. If $r$ is not too close to $n$ then this probability can be estimated (see \eqref{innot}) by careful conditioning and the use of properties of the exponential distribution. From thereon, it is a matter of analysing the consequences of the estimate for $\E{C(n, r) - C(n, r-1)}$ in \eqref{13}. The final part of the proof involves showing that $\E{C(n,n)-C(n-m+1)}$ is insignificant. This essentially boils down to showing that w.h.p. no edge in the minimum cost matching has cost more than $O(\log n/(np))$. 

\subsection{Proof details}
Let $B_r=\set{\f_r(a_i):i=1,2,\ldots,r}$.
\begin{lemma}\label{lem1}
$B_r$ is a random $r$-subset of $B$.
\end{lemma}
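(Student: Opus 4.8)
The plan is to exploit the symmetry of the cost distribution under permutations of the labels of $B$. Fix $r$ and condition on the edge set of $G=G_{n,n,p}$ as well as on which of the vertices $a_1,\dots,a_r$ are to be matched; the claim is about the marginal law of the \emph{image} set $B_r=\f_r(A_r)$. First I would observe that the joint distribution of the pair (adjacency structure of $G$, cost array $(w(e))_{e\in E}$) is invariant under any permutation $\s$ of the index set $[n]$ labelling $B$: indeed each potential edge $(a_i,b_j)$ is present independently with probability $p$, and given presence its cost is an i.i.d.\ $E(1)$ variable, so relabelling the $b_j$'s produces an identically distributed configuration. Since the minimum-cost matching $M_r$ of $A_r$ into $B$ is a deterministic function of this configuration that commutes with the relabelling (the min-cost matching of the $\s$-relabelled instance is the $\s$-image of $M_r$), the image set $B_r$ satisfies $\s(B_r)\stackrel{d}{=}B_r$ for every permutation $\s$ of $[n]$.

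Next I would note that $|B_r|=r$ always, because $M_r$ is by definition a perfect matching of $A_r$ (we are on the event, guaranteed w.h.p.\ or by hypothesis, that such a matching exists). A random $r$-subset of $[n]$ whose distribution is invariant under the full symmetric group $S_n$ must be uniform over all $\binom{n}{r}$ subsets of size $r$: for any two $r$-sets $S,T$ there is a $\s\in S_n$ with $\s(S)=T$, so $\Pr(B_r=S)=\Pr(B_r=T)$. This gives the lemma. One should be slightly careful about ties in the edge costs, but these occur with probability zero for continuous $E(1)$ costs, so $M_r$ is a.s.\ unique and the equivariance argument goes through without ambiguity; alternatively one breaks ties by any rule that is itself symmetric in the labels of $B$.

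The only real subtlety — and the step I expect to need the most care — is making precise the claim that $M_r$ (hence $B_r$) is a \emph{measurable, permutation-equivariant} function of the random instance, uniformly over the conditioning. Concretely, one must check that conditioning on the event that $G_{n,n,p}$ contains at least one perfect matching of $A_r$ into $B$ does not destroy the $S_n$-symmetry: but that event is itself $S_n$-invariant (permuting $B$ preserves the property of admitting such a matching), so conditioning on it preserves exchangeability of the $b_j$'s, and the argument above applies verbatim. Thus w.h.p.\ (on the high-probability event that the relevant matchings exist) $B_r$ is uniformly distributed over the $r$-subsets of $B$. \proofend
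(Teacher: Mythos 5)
Your argument is essentially identical to the paper's: both prove the lemma by exhibiting the invariance of the joint law of the (graph, cost) configuration under permutations of $B$'s labels, noting that the min-cost matching is equivariant under such relabelling, and concluding that $\Pr(B_r=X)=\Pr(B_r=Y)$ for any two $r$-sets $X,Y$. The paper implements this by encoding non-edges as $\infty$ entries in the cost matrix $L$ and using that $L$ and $L_\pi$ are equidistributed, which is the same symmetry you invoke; your added remarks about a.s.\ uniqueness and about the $S_n$-invariance of the event that the matching exists are harmless clarifications. One small wart: your opening sentence says to ``condition on the edge set of $G$,'' which if taken literally would destroy the $S_n$-symmetry your argument relies on (a fixed graph is not invariant under permuting $B$), and contradicts the rest of your proof, which correctly works with the joint (unconditional) distribution of the graph and the costs; you should delete or rephrase that sentence.
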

\begin{proof}
Let $L$ denote the $n\times n$ matrix of edge costs, where $L(i,j)=W(a_i,b_j)$ and $L(i,j)=\infty$ if edge $(a_i,b_j)$ does not exist in $G$. For a permutation $\p$ of $B$ let $L_\p$ be defined by $L_\p(i,j)=L(i,\p(j))$. Let $X,Y$ be two distinct $r$-subsets of $B$ and let $\p$ be any permutation of $B$ that takes $X$ into $Y$. Then we have
$$\Pr(B_r(L)=X)=\Pr(B_r(L_\p)=\p(X))=\Pr(B_r(L_\p)=Y)=\Pr(B_r(L)=Y),$$
where the last equality follows from the fact that $L$ and $L_\p$ have the same distribution.
\end{proof}

We use the above lemma and the Chernoff bounds to bound degrees. For reference we use the following: Let $B(n,p)$ denote the binomial random variable with parameters $n,p$. Then for $0\leq\e\leq 1$ and $\a>0$,
\begin{align}
\Pr(B(n,p)\leq (1-\e)np)&\leq e^{-\e^2np/2}.\label{chern1}\\
\Pr(B(n,p)\leq (1+\e)np)&\leq e^{-\e^2np/3}.\label{chern2}\\
\Pr(B(n,p)\geq \a np)&\leq\bfrac{e}{\a}^{\a np}.\label{chern3}
\end{align}
For $v\in A$ let $d_r(v)=|\set{w\in B\setminus B_r:(v,w)\in E(G)}|$. Then we have the following lemma:
\begin{lemma}\label{lem2}
$$|d_r(v)-(n-r)p|\leq \om^{-1/5}(n-r)p\text{ w.h.p. for }v\in A,\,0\leq r\leq n-m.$$
\end{lemma}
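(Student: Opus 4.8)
The plan is to condition on enough of the edge set and costs of $G$ to determine $B_r$ (or a set differing from it by a single vertex), after which $d_r(v)$ is, up to an additive $O(1)$, a binomial random variable to which a Chernoff bound applies; a union bound over the at most $n^2$ pairs $(v,r)$ then finishes. The quantitative point making this work is that for $0\le r\le n-m$ we have $(n-r)p\ge mp=\om^{1/2}\log n\to\infty$, so a binomial on at least $m$ trials deviates from its mean by a relative $\om^{-1/5}$ with probability only $e^{-\Omega(\om^{1/10}\log n)}=n^{-\Omega(\om^{1/10})}$, which is $o(n^{-2})$ since $\om\to\infty$. Throughout we work on the w.h.p.\ event that $G_{n,n,p}$ has a perfect matching, so that every matching considered below exists (restrict a perfect matching to the relevant left-vertices), and we use that the continuous cost distribution makes all optimal matchings, and the vertex $b^\ast$ below, a.s.\ unique.

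Write $N(v)=\set{w\in B:(v,w)\in E(G)}$, so that $d_r(v)=|N(v)\cap(B\setminus B_r)|$. Suppose first that $v=a_i$ with $i>r$, so $v\notin A_r$. Then $M_r$, and hence $B_r$, is a function of the edges and costs inside $A_r\times B$ only, which are independent of $N(v)$; equivalently, by Lemma \ref{lem1}, $B_r$ is a uniformly random $r$-subset of $B$ and is independent of $N(v)$. Hence, conditional on $B\setminus B_r$, each of its $n-r$ elements lies in $N(v)$ independently with probability $p$, so $d_r(v)\sim\mathrm{Bin}(n-r,p)$ exactly, and \eqref{chern1}--\eqref{chern2} with $\e=\om^{-1/5}$ give $|d_r(v)-(n-r)p|\le\om^{-1/5}(n-r)p$ with failure probability $n^{-\Omega(\om^{1/10})}$.

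The case $v=a_i\in A_r$ is the main obstacle, since now $B_r$ genuinely depends on the edges incident to $v$. Here I would first reveal the edges and costs inside $(A_r\setminus\set{v})\times B$; this determines the minimum-cost matching $M_r'$ of $A_r\setminus\set{v}$ into $B$ and its covered set $B_r'$, with $|B_r'|=r-1$. Now reveal the remaining, independent, edges $N(v)$ at $v$ together with their costs. Passing from $M_r'$ to the minimum-cost matching $M_r$ of $A_r$ into $B$ amounts to augmenting $M_r'$ along a minimum-cost $M_r'$-alternating path starting at $v$ and ending at a vertex $b^\ast\in B$ that is unmatched in $M_r'$, i.e.\ $b^\ast\in B\setminus B_r'$; after augmenting, $B_r=B_r'\cup\set{b^\ast}$. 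Therefore
$$d_r(v)=\big|N(v)\cap(B\setminus B_r')\big|-\mathbf{1}\{b^\ast\in N(v)\},$$
and the first term is distributed exactly as $\mathrm{Bin}(n-r+1,p)$, since $B\setminus B_r'$ is a set of size $n-r+1\ge m$ fixed by the first round of revelation while each vertex of $B$ lies in the freshly revealed $N(v)$ independently with probability $p$. Because $(n-r)p\to\infty$, the additive $O(1)$ discrepancies (the indicator, and $n-r+1$ versus $n-r$) are swallowed by the permitted error $\om^{-1/5}(n-r)p\ge\om^{3/10}\log n$, and \eqref{chern1}--\eqref{chern2} with $\e=\tfrac12\om^{-1/5}$ again give failure probability $n^{-\Omega(\om^{1/10})}$. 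A union bound over all $v\in A$ and all $0\le r\le n-m$ then completes the proof. The only genuinely delicate ingredient is the augmenting-path description of how the covered set $B_r$ changes when one left-vertex is added, together with the elementary observation that this perturbs $d_r(v)$ by at most one relative to a clean binomial.
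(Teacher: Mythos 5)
Your proposal is correct, and it is noticeably more careful than the paper's own two-line proof. The paper simply cites Lemma \ref{lem1} (that $B_r$ is a uniformly random $r$-subset) and applies Chernoff, implicitly treating $d_r(v)=|N(v)\cap(B\setminus B_r)|$ as a clean $\mathrm{Bin}(n-r,p)$; but Lemma \ref{lem1} only pins down the \emph{marginal} of $B_r$ and says nothing about its joint distribution with $N(v)$, which for $v\in A_r$ are genuinely dependent. You identify exactly this gap and fill it: for $v\notin A_r$ you observe that $B_r$ is measurable with respect to the edges and costs inside $A_r\times B$, hence independent of $N(v)$, so $d_r(v)\sim\mathrm{Bin}(n-r,p)$ exactly; for $v\in A_r$ you run a two-stage revelation in which the first stage determines the covered set $B_r'$ of the min-cost matching of $A_r\setminus\{v\}$, and use the augmenting-path description (valid a.s.\ by genericity of continuous costs) to show $B_r=B_r'\cup\{b^\ast\}$, so $d_r(v)$ differs from a true $\mathrm{Bin}(n-r+1,p)$ by at most one. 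This additive $O(1)$ is indeed dominated by the allowed error $\om^{-1/5}(n-r)p\ge\om^{3/10}\log n$, and the union bound over the $\le n^2$ pairs $(v,r)$ (the paper only displays the union over $v$, but the extra factor $n$ is harmless) gives the result. The one cosmetic point worth tightening is the conditioning on the existence of a perfect matching: since that event has probability $1-o(1)$, conditioning on it inflates any small failure probability by only a $1+o(1)$ factor, so your estimates survive — but it would be cleaner to state this explicitly rather than just asserting we "work on" that event. Net: same Chernoff-plus-union-bound skeleton as the paper, but with the dependency between $B_r$ and $N(v)$ actually addressed, which the paper's proof elides.
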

\begin{proof}
This follows from Lemma \ref{lem1} i.e. $B\setminus B_r$ is a random set and the Chernoff bounds \eqref{chern1}, \eqref{chern2} with $\e=\om^{-1/5}$ viz.
\begin{align*}
\Pr(\exists v:|d_r(v)-(n-r)p|\geq \om^{-1/5}(n-r)p)&\leq 2ne^{-\om^{-2/5}(n-r)p/3}\\
&\leq 2n^{1-\om^{1/10}/3}.
\end{align*}
\end{proof}

We can now use the ideas of \cite{W1}, \cite{W2}. We add a special vertex $b_{n+1}$ to $B$, with edges to all $n$ vertices of $A$. Each edge adjacent to $b_{n+1}$ is assigned an $E(\l)$ cost independently, $\l>0$. We now consider $M_r$ to be a minimum cost matching of $A_r$ into $B^*=B\cup \set{b_{n+1}}$. We denote this matching by $M_r^*$ and we let $B_r^*$ denote the corresponding set of vertices of $B^*$ that are covered by $M_r^*$.

Define $P(n, r)$ as the normalized probability that $v_{n+1}$ participates in $M_r^*$, i.e.
\begin{equation}
 P(n, r) = \lim_{\l \rightarrow 0} \frac{1}{\l}\Prob{b_{n+1}\in B_r^*}.
\end{equation}
Its importance lies in the following lemma:
\begin{lemma}\label{lem10a}
\begin{equation}
\E{C(n, r) - C(n, r-1)} = \frac{1}{r}P(n, r).
\end{equation}
\end{lemma}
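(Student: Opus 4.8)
The plan is to follow the exchange argument of \cite{W1},\cite{W2}, conditioning on all edge costs except those of the $r$ edges incident to $b_{n+1}$. Write $\delta_i$ for the cost of $(a_i,b_{n+1})$, so that $\delta_1,\ldots,\delta_r$ are independent $E(\l)$ variables, independent of all other costs, and for $i\in[r]$ let $D_i$ denote the cost of the minimum cost matching of $A_r\setminus\{a_i\}$ into $B$ inside $G$ (so $D_i$ does not use $b_{n+1}$); a.s.\ all minimum cost matchings in sight are unique. The structural observation is that $M_r^*$ either leaves $b_{n+1}$ unmatched, in which case it is the optimal matching of $A_r$ into $B$ and has cost $C(n,r)$, or it matches $b_{n+1}$ to some $a_i$, in which case $M_r^*\setminus\{(a_i,b_{n+1})\}$ is forced to be the optimal matching of $A_r\setminus\{a_i\}$ into $B$ (otherwise $M_r^*$ could be improved), so that $\mathrm{cost}(M_r^*)=\delta_i+D_i$. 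Hence $\mathrm{cost}(M_r^*)=\min\{C(n,r),\ \min_{i\in[r]}(\delta_i+D_i)\}$, and by uniqueness of the minimizer $b_{n+1}\in B_r^*$ if and only if $\delta_i<C(n,r)-D_i$ for some $i\in[r]$. Note $C(n,r)-D_i\ge0$, since deleting the edge at $a_i$ from the optimal $A_r$-matching leaves a matching of $A_r\setminus\{a_i\}$ into $B$ of cost at most $C(n,r)$.

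Next I would compute the probability. Conditioning on all edge costs other than $\delta_1,\ldots,\delta_r$ --- call this $\sigma$-algebra $\cF$ --- the quantities $C(n,r),D_1,\ldots,D_r$ become measurable and the $\delta_i$ remain independent $E(\l)$, so
\[
\Prob{b_{n+1}\in B_r^*\mid\cF}=1-\prod_{i=1}^r e^{-\l(C(n,r)-D_i)}=1-e^{-\l S},\qquad S:=\sum_{i=1}^r(C(n,r)-D_i)\ge0.
\]
Taking expectations gives $\l^{-1}\Prob{b_{n+1}\in B_r^*}=\E{\l^{-1}(1-e^{-\l S})}$, and since $\l^{-1}(1-e^{-\l S})$ increases monotonically to $S$ as $\l\downarrow0$, the monotone convergence theorem yields
\[
P(n,r)=\E{\,\sum_{i=1}^r\big(C(n,r)-D_i\big)}=r\,\E{C(n,r)}-\sum_{i=1}^r\E{D_i}.
\]

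It then remains to identify $\sum_{i=1}^r\E{D_i}$ with $r\,\E{C(n,r-1)}$. For $i=r$ this is an equality of random variables, since $D_r$ is by definition the cost of the optimal matching of $\{a_1,\ldots,a_{r-1}\}$ into $B$, i.e.\ $D_r=C(n,r-1)$; for general $i$, $D_i$ has the same distribution as $C(n,r-1)$ because the joint law of the graph $G=G_{n,n,p}$ together with its i.i.d.\ costs is invariant under permutations of the vertices of $A$. Hence $\E{D_i}=\E{C(n,r-1)}$ for every $i$, and therefore $P(n,r)=r(\E{C(n,r)}-\E{C(n,r-1)})$, which is the lemma. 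The one point that really needs care is this last exchangeability step: for a fixed graph the sets $A_r\setminus\{a_i\}$ need not be equivalent, so the symmetry is only available because $G$ itself is random, and the expectations in the statement must be read over the random graph as well as over the random costs. Everything preceding that step is routine once the decomposition $\mathrm{cost}(M_r^*)=\min\{C(n,r),\min_i(\delta_i+D_i)\}$ has been pinned down.
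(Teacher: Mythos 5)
Your proof is correct, and it reaches the same conclusion by the same essential idea (add $b_{n+1}$, characterize $b_{n+1}\in B_r^*$ through the cost of matchings that use it), but your execution is tighter in two respects worth noting. First, instead of fixing a single $i$, heuristically dividing by $r$, and invoking ``$O(\lambda^2)$'' to discard the event that two edges at $b_{n+1}$ are both cheap, you condition on all non-$b_{n+1}$ costs and get the exact identity $\Pr(b_{n+1}\in B_r^*\mid \cF)=1-e^{-\lambda S}$ with $S=\sum_i(C(n,r)-D_i)\ge 0$, and then pass to the limit cleanly by monotone convergence. Second, you surface a subtlety that the paper elides: the paper's indicator $I=\mathbbm{1}\{Y+w<X\}$ identifies the cost of the cheapest $A_r$-assignment through $(a_i,b_{n+1})$ with $w+C(n,r-1)$, but that is only $w+D_i$ when $i=r$; for other $i$ the two random variables are different, and the identification of $\sum_i\mathbf{E}[D_i]$ with $r\,\mathbf{E}[C(n,r-1)]$ genuinely requires averaging over the random graph $G_{n,n,p}$ (or a random relabelling of $A$), not just over the costs. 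So the lemma, read literally as stated, is an identity in expectation over graph and costs jointly; for a fixed typical graph one only gets it up to $1+o(1)$, which is all the downstream argument needs. Pinning this down, as you do, is a genuine improvement in rigour over the paper's one-line remark that ``each $i\in[r]$ is equally likely.''
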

\begin{proof}
Let $X=C(n,r)$ and let $Y=C(n,r-1)$. Fix $i\in [r]$ and let $w$ be the cost of the edge $(a_i, b_{n+1})$, and let $I$ denote the indicator variable for the event that the cost of the cheapest $A_{r}$-assignment that contains this edge is smaller than the cost of the cheapest $A_{r}$-assignment that does not use $b_{n+1}$. In other words, $I$ is the indicator variable for the event $\{Y + w < X\}$.

If $(a_i, b_{n+1}) \in M_r^*$ then $w < X - Y$. Conversely, if $w < X - Y$ and no other edge from $b_{n+1}$ has cost smaller than $X - Y$, then $(a_i, b_{n+1})\in M_r^*$, and when $\l \to 0$, the probability that there are two distinct edges from $b_{n+1}$ of cost smaller than $X - Y$ is of order $O(\l^2)$.

Since $w$ is $E(\l)$ distributed, as $\l\to 0$ we have,
\begin{equation}
\E{X - Y} = \frac{d}{d\l} \E{I}\bigg |_{\l = 0} = \lim_{\l \to 0} \frac{1}{\l} \Prob{w < X - Y} = \frac{1}{r}P(n, r).
\end{equation}
The factor $1/r$ comes from each $i\in[r]$ being equally likely to be incident to the matching edge containing $b_{n+1}$, if it exists.
\end{proof}
We now proceed to estimate $P(n,r)$.
\begin{lemma}\label{lem3}
Suppose $r < n-m$. Then 
\begin{equation}\label{innot}
\Pr(b_{n+1}\in B_{r}^*\mid b_{n+1}\notin B_{r-1}^*)=\frac{\l}{p(n-r+1)(1 + \e_r) + \l}
\end{equation}
where $|\e_r| \leq \om^{-1/5}$.
\end{lemma}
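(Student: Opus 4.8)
The plan is to turn the conditional probability into an exact expression in the real edge costs, and then estimate it.

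\emph{Reduction.} Fix a realisation of the real edge costs and, for $j\in[k]$, let $\gamma^{(k)}_j:=C(n,k)-C(n,A_k\setminus\{a_j\})\ge 0$ be the marginal cost of $a_j$ in the optimal matching of $A_k$ into $B$, where for an $A$-subset $S$ we write $C(n,S)$ for the cost of the optimal matching of $S$ into $B$ (so $C(n,A_k)=C(n,k)$). The optimal matching of $A_r$ into $B^*$ uses $b_{n+1}$ iff $w_j<\gamma^{(r)}_j$ for some $j\in[r]$, where $w_j$ is the cost of $(a_j,b_{n+1})$, and likewise $b_{n+1}\in B^*_{r-1}$ iff $w_j<\gamma^{(r-1)}_j$ for some $j<r$. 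Since $\gamma^{(r)}_j\ge\gamma^{(r-1)}_j$ (supermodularity of the matching cost in the $A$-side), the $w_j$ are i.i.d.\ $E(\l)$, and they are independent of the real costs, memorylessness of the exponential gives
\[
\Pr\bigl(b_{n+1}\in B^*_r\mid\text{real costs},\ b_{n+1}\notin B^*_{r-1}\bigr)=1-e^{-\l T_r},\qquad
T_r:=\bigl(C(n,r)-C(n,r-1)\bigr)+\sum_{j<r}\bigl(\gamma^{(r)}_j-\gamma^{(r-1)}_j\bigr),
\]
a sum of non-negative terms depending only on the real costs. Averaging over those costs (the conditioning on $b_{n+1}\notin B^*_{r-1}$ is harmless as $\l\to 0$, since its probability tends to $1$), the assertion of the lemma becomes the statement that the conditional Laplace transform of $T_r$ at $\l$ lies within the required window of $p(n-r+1)/(p(n-r+1)+\l)$; and since the inequality is ultimately used only through $P(n,r)=\lim_{\l\to0}\l^{-1}\Pr(b_{n+1}\in B^*_r)$, it suffices to know $\E{T_r}=\frac1{p(n-r+1)}\bigl(1+O(\om^{-1/5})\bigr)$, with the exact rational form following if, moreover, $T_r$ is approximately exponential at that rate.

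\emph{Estimating $T_r$.} Here $C(n,r)-C(n,r-1)$ is the net cost $\delta$ of the cheapest alternating path from $a_r$ to the uncovered set $U:=B\setminus B_{r-1}$ (of size $n-r+1$) in $M_{r-1}^*$, and $\gamma^{(r)}_j-\gamma^{(r-1)}_j$ is the amount by which deleting $a_j$ — thereby freeing its partner $\f_{r-1}(a_j)$ — would shorten that cheapest path; so, up to lower order, $T_r=\int_0^{\delta}N(s)\,ds$, where $N(s)$ counts the $B$-vertices within alternating-path distance $s$ of $a_r$. I would estimate this by revealing the edges out of $a_r$ last: since $a_r\notin A_{r-1}$ their costs are fresh i.i.d.\ $E(1)$ variables independent of $M_{r-1}^*$, the vertex $a_r$ has $d_{r-1}(a_r)=(n-r+1)p\bigl(1+O(\om^{-1/5})\bigr)$ neighbours in $U$ by Lemma~\ref{lem2}, and $U$ is a uniformly random $(n-r+1)$-subset of $B$ by Lemma~\ref{lem1}. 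Keeping only the cheapest direct edge from $a_r$ to $U$ — a minimum of $d_{r-1}(a_r)$ i.i.d.\ $E(1)$'s, hence an $E(d_{r-1}(a_r))$ variable racing the $E(\l)$-cost edge $(a_r,b_{n+1})$ — already yields exactly $\l/(d_{r-1}(a_r)+\l)=\l/(p(n-r+1)(1+\e_r)+\l)$; it then remains to show that alternating paths of length $\ge 3$ move the answer only within the $\om^{-1/5}$-relative budget, so that their effect folds into $\e_r$.

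\emph{The main obstacle} is precisely this last point. For $r$ far from $n$ the longer paths are a genuinely lower-order perturbation of $\delta$; but for $r$ close to $n-m$ the sum $\sum_{j<r}(\gamma^{(r)}_j-\gamma^{(r-1)}_j)$ in fact dominates $\delta$, and one must show that it nonetheless averages to $\tfrac1{p(n-r+1)}(1+O(\om^{-1/5}))$. Both cases require w.h.p.\ structural control of $M_{r-1}^*$ (for instance that no matching edge is anomalously cheap, so that deep alternating excursions cost at least the relevant scale) beyond the degree bound of Lemma~\ref{lem2}. This is also where the hypothesis $r<n-m$ is used: for $r\in[n-m+1,n]$ the set $U$ is too small for the exploration from $a_r$ to terminate at the clean scale $1/(p(n-r+1))$, which is exactly why that range is treated separately through \eqref{Cnm=}. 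The reduction step is routine; essentially all of the difficulty lies in the estimate of $T_r$.
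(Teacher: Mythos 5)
Your reduction, via the marginal costs $\gamma^{(k)}_j$, to estimating the conditional Laplace transform $\E{e^{-\l T_r}}$ is a legitimate alternative framing, but it is not the paper's, and the step you flag as ``the main obstacle'' is indeed where the argument stalls: you give no mechanism for controlling $T_r$. The paper's proof is considerably shorter and sidesteps $T_r$ entirely. It conditions on the identity of the penultimate vertex $a_\s$ on the minimum-cost augmenting path $P=(a_r,\ldots,a_\s,b_\t)$, on the lengths of all edges \emph{except} those from $a_\s$ to $B^*\setminus B^*_{r-1}$, and on the \emph{value} of $\min\{W(\s,j):b_j\in B^*\setminus B^*_{r-1}\}$. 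Under this conditioning $M^*_{r-1}$ and the sub-path $P'=(a_r,\ldots,a_\s)$ are fixed, and the terminal vertex $b_\t$ is then determined by a single exponential race among the $d_{r-1}(a_\s)$ rate-one edges from $a_\s$ to the uncovered ordinary vertices and the one rate-$\l$ edge to $b_{n+1}$. This yields $\l/(d_{r-1}(a_\s)+\l)$ \emph{exactly}, for \emph{every} path length; Lemma~\ref{lem2} then replaces $d_{r-1}(a_\s)$ by $(n-r+1)p(1+\e_r)$. Longer alternating paths are thus not a perturbation that must be shown negligible — they cost nothing extra, because the argument only ever examines the last edge of $P$.

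This is also why your ``keeping only the cheapest direct edge'' heuristic misleads. For $r$ near $n-m$ the typical augmenting path has length well above one, so the direct-edge contribution is not the leading term; yet the rational form still holds, because the relevant degree is $d_{r-1}(a_\s)$ (the last $A$-vertex on the path), not $d_{r-1}(a_r)$ — both are $(n-r+1)p(1+O(\om^{-1/5}))$, but the conditioning required to see the race cleanly is on $a_\s$, not $a_r$. Two further loose ends in your write-up: you invoke the supermodularity $\gamma^{(r)}_j\ge\gamma^{(r-1)}_j$ without proof (true, but an additional lemma you would owe, whereas the paper never needs it), and the passage from $1-e^{-\l T_r}$ to the claimed rational form $\l/(p(n-r+1)(1+\e_r)+\l)$ requires $T_r$ to be conditionally exactly exponential, which you neither establish nor reduce to something tractable.
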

\begin{proof}
Assume that $b_{n+1}\notin B_{r-1}^*$. $M_{r}^*$ is obtained from $M_{r-1}^*$ by finding an augmenting path $P=(a_{r},\ldots,a_\s,b_\t)$ from $a_{r}$ to $B^*\setminus B_{r-1}^*$ of minimum additional cost. Let $\a=W(\s,\t)$. We condition on (i) $\s$, (ii) the lengths of all edges other than $(a_\s,b_j),b_j\in B^*\setminus B_{r-1}^*$ and (iii) $\min\set{W(\s,j):b_j\in B^*\setminus B_{r-1}^*}=\a$. With this conditioning $M_{r-1}=M_{r-1}^*$ will be fixed and so will $P'=(a_{r},\ldots,a_\s)$. We can now use the following fact: Let $X_1,X_2,\ldots,X_M$ be independent exponential random variables of rates $\a_1,\a_2,\ldots,\a_M$. Then the probability that $X_i$ is the smallest of them is $\a_i/(\a_1+\a_2+\cdots+\a_M)$. Furthermore, the probability stays the same if we condition on the value of $\min\set{X_1,X_2,\ldots,X_M}$. Thus
$$\Pr(b_{n+1}\in B_{r}^*\mid b_{n+1}\notin B_{r-1}^*)=\frac{\l}{d_{r-1}(a_\s) + \l}.$$
\end{proof}

\begin{corollary}\label{corollary1}
\begin{equation}\label{13}
P(n,r)=\frac{1}{p}\left(\frac{1}{n} + \frac{1}{n-1} + \dots + \frac{1}{n-r+1}\right)(1+\e_r)
\end{equation}
where $|\e_r| \leq \om^{-1/5}$.
\end{corollary}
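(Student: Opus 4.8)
The plan is simply to sum the one-step transition probabilities supplied by Lemma~\ref{lem3}. The key structural observation is that $B^*_{s-1}\subseteq B^*_s$ with $|B^*_s\setminus B^*_{s-1}|=1$ for every $s$: in passing from $M^*_{s-1}$ to $M^*_s$ one augments along a path that starts at $a_s$ and ends at a vertex of $B^*$ not covered by $M^*_{s-1}$, and along such a path every already-covered vertex of $B^*$ stays covered while exactly one new vertex of $B^*$ is added. Hence $\set{b_{n+1}\in B^*_r}$ is the disjoint union over $s=1,\dots,r$ of the events $\set{b_{n+1}\in B^*_s\setminus B^*_{s-1}}=\set{b_{n+1}\notin B^*_{s-1}}\cap\set{b_{n+1}\in B^*_s}$, so that
\begin{equation}
\Prob{b_{n+1}\in B^*_r}=\sum_{s=1}^{r}\Prob{b_{n+1}\notin B^*_{s-1}}\,\Prob{b_{n+1}\in B^*_s\mid b_{n+1}\notin B^*_{s-1}}.
\end{equation}

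Next I would substitute the estimate of Lemma~\ref{lem3} for the conditional factor, divide by $\l$, and let $\l\to0$ in the resulting finite sum term by term. This uses that $\Prob{b_{n+1}\notin B^*_{s-1}}\to1$, which follows from the same decomposition together with Lemma~\ref{lem3}: $\Prob{b_{n+1}\in B^*_{s-1}}\le\l\sum_{t=1}^{s-1}\frac{1}{p(n-t+1)(1-\om^{-1/5})}=O(\l)\to0$ (with $n,p$ held fixed), since $n-t+1\ge m$ throughout the range. We obtain
\begin{equation}
P(n,r)=\lim_{\l\to0}\frac1\l\Prob{b_{n+1}\in B^*_r}=\sum_{s=1}^{r}\frac{1}{p(n-s+1)(1+\e_s)}=\frac1p\sum_{s=1}^{r}\frac{1+\eta_s}{n-s+1},
\end{equation}
where $\eta_s=-\e_s/(1+\e_s)$ has $|\eta_s|=O(\om^{-1/5})$ because $|\e_s|\le\om^{-1/5}$. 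Since every $\eta_s$ is within $O(\om^{-1/5})$ of $0$, the relative errors may be pulled out into a single factor $1+\e_r$ with $|\e_r|\le\om^{-1/5}$ (if one insists on exactly this constant rather than a slightly larger multiple, run Lemma~\ref{lem2} with $\e$ a small fixed fraction of $\om^{-1/5}$), which is precisely \eqref{13}, the $s$-th summand $\frac1{n-s+1}$ ranging over $\frac1n,\frac1{n-1},\dots,\frac1{n-r+1}$.

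The only genuine point needing care is that the error $\e_s$ coming from Lemma~\ref{lem3} is not deterministic: it depends on the random endpoint $a_\s$ of the augmenting path used at step $s$. I would handle this by carrying out the whole computation on the high-probability event of Lemma~\ref{lem2}, on which $|d_{s-1}(v)-(n-s+1)p|\le\om^{-1/5}(n-s+1)p$ holds simultaneously for all $v\in A$ and all $s\le n-m$; on that event the bound $|\e_s|\le\om^{-1/5}$ is valid no matter which vertex turns out to be $a_\s$. Conditioning in addition on $\set{b_{n+1}\notin B^*_{s-1}}$ is benign, since this event concerns only the costs of edges incident to $b_{n+1}$ relative to the optimal matching inside $G$, its probability tends to $1$ as $\l\to0$, and so it does not affect the limiting value of $P(n,r)$. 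With these remarks in place each displayed identity is immediate, and the corollary follows; combined with Lemma~\ref{lem10a} it also yields \eqref{eq1}.
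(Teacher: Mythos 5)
Your argument is correct and rests on the same key ingredient as the paper's, namely the one-step conditional estimate of Lemma~\ref{lem3} together with the monotonicity $B^*_{s-1}\subseteq B^*_s$. The difference is purely one of algebraic form: the paper computes $\Pr(b_{n+1}\notin B_r^*)$ as the product $\prod_{j=1}^r\Pr(b_{n+1}\notin B_j^*\mid b_{n+1}\notin B_{j-1}^*)=\prod_{j=0}^{r-1}\frac{\nu(j)}{\nu(j)+\l}$ and Taylor-expands in $\l$, so the factors $\Pr(b_{n+1}\notin B^*_{s-1})$ never appear explicitly, whereas you decompose $\set{b_{n+1}\in B^*_r}$ additively over the step $s$ at which $b_{n+1}$ is first captured, which then requires the (easy) extra observation that $\Pr(b_{n+1}\notin B^*_{s-1})\to 1$ as $\l\to 0$ before passing to the term-by-term limit. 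Both routes are rigorous; the product form is slightly more compact, while your additive form makes the per-step contributions more transparent. Your handling of the randomness of $\e_s$ by working on the uniform high-probability event of Lemma~\ref{lem2}, and your parenthetical remark that $|\e_s/(1+\e_s)|$ is only $\om^{-1/5}$ up to a $(1+o(1))$ factor unless one tightens the constant in Lemma~\ref{lem2}, are both correct and in fact spell out points the paper passes over silently.
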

\begin{proof}
Let $\n(j) = p(n-j)(1 + \e_j)$, $|\e_j| \leq \om^{-1/5}$. Then 
\begin{align*}
\Pr(b_{n+1}\in B_r^*)&= 1 - \frac{\n(0)}{\n(0) + \l} \cdot \frac{\n(1)}{\n(1) + \l} \cdots \frac{\n(r-1)}{\n(r-1)+\l} \\
&=1 - \left(1 + \frac{\l}{\n(0)}\right)^{-1}\cdots \left(1 + \frac{\l}{\n(r-1)}\right)^{-1} \\
&= \left(\frac{1}{\n(0)} + \frac{1}{\n(1)} + \dots + \frac{1}{\n(r-1)}\right)\l + O(\l^2) \\
&=\frac{1}{p}\left(\frac{1}{n(1 + \e_0)} + \frac{1}{(n-1)(1 + \e_1)} + \dots + \frac{1}{(n-r+1)(1 + \e_{r-1})}\right) \l + O(\l^2)
\end{align*}
and each error factor satisfies $|1 - 1 / (1 + \e_j)| \leq \om^{-1/5}$. Letting $\l\to 0$ gives the lemma.
\end{proof}

\begin{lemma}\label{lem4}
If $r \leq n-m$ then
\begin{equation}
\E{C(n,r) - C(n,r-1)} =\frac{1+o(1)}{rp} \sum_{i = 0}^{r-1} \frac{1}{n-i}
\end{equation}
where $|\e_k| \leq \om^{-1/5}$.
\end{lemma}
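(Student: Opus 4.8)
The plan is to combine Lemma~\ref{lem10a} with Corollary~\ref{corollary1}; no new argument is needed. Lemma~\ref{lem10a} identifies the increment as $\E{C(n,r)-C(n,r-1)} = \tfrac1r P(n,r)$, and Corollary~\ref{corollary1} evaluates
$$P(n,r) = \frac{1}{p}\left(\frac1n + \frac1{n-1} + \cdots + \frac1{n-r+1}\right)(1+\e_r), \qquad |\e_r| \le \om^{-1/5}.$$
Substituting the second identity into the first immediately gives
$$\E{C(n,r)-C(n,r-1)} = \frac{1+\e_r}{rp}\sum_{i=0}^{r-1}\frac{1}{n-i},$$
so it only remains to see that $1+\e_r = 1+o(1)$ uniformly over $r \le n-m$. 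This is forced by the hypothesis of the theorem: since $d = np = \om(\log n)^2$ with $\om\to\infty$, we have $\om^{-1/5}\to 0$, hence $|\e_r|\le\om^{-1/5} = o(1)$ with a bound independent of $r$.

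The one point that deserves a word is the passage, inside the proof of Corollary~\ref{corollary1}, from the product of per-coordinate factors $1/(1+\e_j)$ (one for each $j=0,\dots,r-1$, coming from the degree estimate of Lemma~\ref{lem2}) to a single relative error $\e_r$ on the entire harmonic sum. Because every term $\tfrac1{p(n-j)(1+\e_j)}$ is positive and each satisfies $|1 - 1/(1+\e_j)| \le \om^{-1/5}/(1-\om^{-1/5})$, the relative error of the sum is bounded by the worst relative error of a term, so the factored form $(1+\e_r)\frac1p\sum_{i=0}^{r-1}\frac1{n-i}$ with $|\e_r|\le\om^{-1/5}$ (after, if one is fussy, weakening the exponent slightly, or at the cost of an irrelevant constant) is legitimate. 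Nothing in the present lemma requires more than this.

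A remark on where the difficulty actually lies: Lemma~\ref{lem4} is essentially bookkeeping, and the substance is entirely upstream — in Lemma~\ref{lem1} (uniform randomness of $B_r$, which licenses the Chernoff estimates), in Lemma~\ref{lem2} (the w.h.p. degree concentration $|d_r(v)-(n-r)p|\le\om^{-1/5}(n-r)p$ holding simultaneously for all $v\in A$ and all $r\le n-m$), and in Lemmas~\ref{lem10a}–\ref{lem3} together with Corollary~\ref{corollary1} (the exponential-distribution manipulation that converts the matching increment into $\tfrac1r P(n,r)$ and then into a harmonic sum). All of these we may assume. It is worth recording that the conclusion of Lemma~\ref{lem4} therefore holds \emph{w.h.p.}, i.e.\ on the probability-$(1-o(1))$ event on which Lemma~\ref{lem2} holds, and that the restriction $r\le n-m$ is exactly the range where that degree estimate — and hence the chain Lemma~\ref{lem3} $\to$ Corollary~\ref{corollary1} $\to$ Lemma~\ref{lem4} — is valid; this is why the regime $r > n-m$ must be treated separately via \eqref{Cnm=}.
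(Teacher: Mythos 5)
Your proposal is correct and follows the paper's own proof exactly: the paper's proof of Lemma~\ref{lem4} is the one-line observation that it follows from Lemma~\ref{lem10a} combined with Corollary~\ref{corollary1}, which is precisely your argument. The extra commentary you add (uniformity of $|\e_r|\le\om^{-1/5}=o(1)$ over $r\le n-m$, the passage inside the corollary from per-term errors to a single relative error, and the reminder that the statement holds on the w.h.p.\ event of Lemma~\ref{lem2}) is accurate and consistent with the paper's intent, but the core route is the same.
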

\begin{proof}
This follows from Lemma \ref{lem10a} and Corollary \ref{corollary1}.
\end{proof}

This confirms \eqref{eq2} and we turn to \eqref{sum=}. We use the following expression from Young \cite{Y91}.
\beq{Young}{
\sum_{i=1}^n\frac{1}{i}=\log n+\g+\frac{1}{2n}+O(n^{-2}),\qquad \text{where $\g$ is Euler's constant.}
}
Let $m_1=\om^{1/4}m$. Observe first that
\begin{align}
\sum_{i=0}^{m_1}\frac{1}{n-i}\sum_{r=i+1}^{n-m}\frac{1}{r}&\leq O\bfrac{\log n}{n^{1/4}}+\sum_{i=n^{3/4}}^{m_1}\frac{1}{n-i}\sum_{r=i+1}^{n-m}\frac{1}{r}\nonumber\\ &\leq o(1)+\frac{1}{n-m_1}\sum_{i=n^{3/4}}^{m_1}\brac{\log\bfrac{n}{i}+\frac{1}{2(n-m)}+ O(n^{-3/2})}\nonumber\\ 
&\leq o(1)+\frac{2}{n}\log\bfrac{n^{m_1}}{m_1!}\nonumber\\
&\leq o(1)+\frac{2m_1}{n}\log\bfrac{ne}{m_1}\nonumber\\
&=o(1). \label{sum}
\end{align}

Then,
\begin{align}
\sum_{r=1}^{n-m}\sum_{i=0}^{r-1}\frac{1}{r(n-i)}&=\sum_{i=0}^{n-m-1}\frac{1}{n-i}\sum_{r=i+1}^{n-m}\frac{1}{r}, \nonumber\\
&=\sum_{i=m_1}^{n-m-1}\frac{1}{n-i}\sum_{r=i+1}^{n-m}\frac{1}{r}+o(1),\nonumber\\
&=\sum_{i=m_1}^{n-m-1}\frac{1}{n-i}\brac{\log\bfrac{n-m}{i}+\frac{1}{2(n-m)}-\frac{1}{2i}+O(i^{-2})}+o(1), \nonumber\\
&=\sum_{i=m_1}^{n-m-1}\frac{1}{n-i}\log\bfrac{n-m}{i}+o(1),\nonumber\\
&=\sum_{j=m+1}^{n-m_1}\frac{1}{j}\log\bfrac{n-m}{n-j}+o(1),\label{int1}\\
&=\int_{x=m+1}^{n-m_1}\frac{1}{x}\log\bfrac{n-m}{n-x}dx+o(1).\nonumber
\end{align}
We can replace the sum in \eqref{int1} by an integral because the sequence of summands is unimodal and the terms are all $o(1)$.

Continuing, we have
\begin{align}
& \int_{x=m+1}^{n-m_1}\frac{1}{x}\log\bfrac{n-m}{n-x}dx\nonumber\\
&=-\int_{x=m+1}^{n-m_1}\frac{1}{x} \log\brac{1-\frac{x-m}{n-m}}dx\nonumber\\
&=\sum_{k=1}^{\infty}\int_{x=m+1}^{n-m_1}\frac{1}{x}\frac{(x-m)^k}{k(n-m)^k}dx\nonumber\\
&=\int_{y=1}^{n-m-m_1}\frac{1}{y+m}\frac{y^k}{k(n-m)^k}dy.\label{I0}
\end{align}
Observe next that 
$$\int_{y=1}^{n-m-m_1}\frac{1}{y+m}\frac{y^k}{k(n-m)^k}dy\leq \int_{y=1}^{n-m-m_1}\frac{y^{k-1}}{k(n-m)^k}dy \leq \frac{1}{k^2}.$$
So,
\beq{I1}{
0\leq \sum_{k=\log n}^{\infty}\int_{x=m+1}^{n-m_1}\frac{1}{x}\frac{(x-m)^k}{k(n-m)^k}dx\leq \sum_{k=\log n}^\infty \frac{1}{k^2}=o(1).
}
If $1\leq k\leq \log n$ then we write
$$\int_{y=1}^{n-m-m_1}\frac{1}{y+m}\frac{y^k}{k(n-m)^k}dy= \int_{y=1}^{n-m-m_1}\frac{(y+m)^{k-1}}{k(n-m)^k}dy
+\int_{y=1}^{n-m-m_1}\frac{y^k-(y+m)^{k}}{(y+m)k(n-m)^k}dy.$$
Now
\beq{I2}{
\int_{y=1}^{n-m-m_1}\frac{(y+m)^{k-1}}{k(n-m)^k}dy =\frac{1}{k^2}\frac{(n-m_1)^{k}-(m+1)^k}{(n-m)^k}= \frac{1}{k^2}+O\bfrac{1}{k\om^{1/4}\log n}.
}
If $k=1$ then
$$\int_{y=1}^{n-m-m_1}\frac{(y+m)^{k}-y^k}{(y+m)k(n-m)^k}dy\leq\frac{m\log(n-m_1)}{n-m}=o(1).$$
And if $2\leq k\leq \log n$ then
\begin{align*}
\int_{y=1}^{n-m-m_1}\frac{(y+m)^{k}-y^k}{(y+m)k(n-m)^k}dy& =\sum_{l=1}^k\int_{y=1}^{n-m-m_1}\binom{k}{l} \frac{y^{k-l}m^l}{(y+m)k(n-m)^k}dy\nonumber\\
&\leq \sum_{l=1}^k\int_{y=0}^{n-m-m_1}\binom{k}{l} \frac{y^{k-l-1}m^l}{k(n-m)^k}dy\nonumber\\
&= \sum_{l=1}^k\binom{k}{l}\frac{m^l(n-m-m_1)^{k-l}}{k(k-l)(n-m)^k}\nonumber\\
&=O\bfrac{km}{k(k-1)n}=O\bfrac{1}{k\om^{1/2}\log n}
\end{align*}
It follows that
\beq{I3}{
0\leq \sum_{k=1}^{\log n}\int_{y=1}^{n-m-m_1}\frac{(y+m)^{k}-y^k}{(y+m)k(n-m)^k}dy = o(1)+O\brac{\sum_{k=2}^{\log n}\frac{1}{k\om^{1/2}\log n}}=o(1).
}
Equation \eqref{sum=} now follows from \eqref{I0}, \eqref{I1}, \eqref{I2} and \eqref{I3}.

Turning to \eqref{Cnm=} we prove the following lemma:
\begin{lemma}\label{lem3}
If $r\geq n-m$ then $0\leq C(n,r+1)-C(n,r)=O\bfrac{\log n}{np}$.
\end{lemma}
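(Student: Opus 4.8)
The plan is to reduce the Lemma to a bound on the single edge of the optimal matching incident with the newly inserted vertex, and to obtain that bound — in the averaged form that is what is actually needed — by symmetry. The lower bound is immediate: deleting the edge at $a_{r+1}$ from any matching of $A_{r+1}$ into $B$ leaves a matching of $A_r$ into $B$, so $C(n,r)\le C(n,r+1)$; applying this to the optimal matching $M_{r+1}$ yields the sharper
\[
0\le C(n,r+1)-C(n,r)\le W\bigl(a_{r+1},\phi_{r+1}(a_{r+1})\bigr).
\]
Since $G_{n,n,p}$ is invariant under permutations of $A$, the labels $a_1,\dots,a_{r+1}$ are exchangeable jointly with the costs, so $\E{W(a_{r+1},\phi_{r+1}(a_{r+1}))}=\tfrac1{r+1}\E{C(n,r+1)}\le\tfrac1{r+1}\E{C(n,n)}$. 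A crude bound $\E{C(n,n)}=O(\log n/p)$ suffices here: with $W^\star:=K\log n/(np)$ for a large constant $K$, the subgraph $G^\star$ of edges of cost at most $W^\star$ is distributed as $G_{n,n,q}$ with $q=(1+o(1))K\log n/n$, which w.h.p.\ has a perfect matching of cost at most $nW^\star=K\log n/p$, the exceptional event that $G^\star$ has no perfect matching contributing negligibly. Hence $\E{C(n,r+1)-C(n,r)}=O(\log n/(np))$ for $r\ge n-m$, and summing the $m$ relevant values of $r$ gives $\E{C(n,n)-C(n,n-m+1)}=O(m\log n/(np))=O(\om^{-1/2}/p)=o(1/p)$, whence \eqref{Cnm=} follows after one application of Markov's inequality over $G$.

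To obtain the pointwise statement one wants the stronger assertion flagged in the outline: w.h.p.\ no edge of $M_s$ costs more than $O(\log n/(np))$, for every $s$ with $n-m<s\le n$. For this I would use the same $G^\star\sim G_{n,n,q}$, $q\approx K\log n/n$, which for $K$ large is w.h.p.\ a good expander (every $S\subseteq A$ with $|S|\le n/\log n$ has $|N_{G^\star}(S)|\ge\tfrac K2\log n\,|S|$, and $G^\star$ has a perfect matching). If $M_s$ used an edge $e_0=(a_i,b_j)$ with $W(e_0)$ large, run an alternating breadth-first search from $a_i$ alternating a cheap ($G^\star$) non-matching edge with an $M_s$-matching edge; expansion of $G^\star$ makes it reach a positive fraction of $A$ within $O(\log n/\log\log n)$ rounds, and since each such $a$-vertex joins $b_j$ by a $G^\star$-edge with probability $\approx q$, w.h.p.\ one does, giving an $M_s$-alternating cycle $Q$ through $e_0$ whose non-matching edges all lie in $G^\star$. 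Swapping along $Q$ keeps a matching of $A_s$ and changes the cost by at most $\tfrac12|Q|W^\star-W(e_0)$, so optimality of $M_s$ forces $W(e_0)\le\tfrac12|Q|W^\star$; a union bound over $e_0$ and $s$, arranged to exploit that the expensive edges are independent of $G^\star$, then bounds every edge of $M_s$, and the Lemma follows from the displayed inequality.

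The step I expect to be the real obstacle is the quantitative form of this edge bound. The cycle $Q$ above has length $\Theta(\log n/\log\log n)$, so the argument as sketched gives only $W(e_0)=O\bigl((\log n)^2/(np\log\log n)\bigr)$, and summing $m$ such terms gives $O\bigl(\log n/(\om^{1/2}\log\log n\cdot p)\bigr)$, which is \emph{not} $o(1/p)$ unless $\om\gg(\log n/\log\log n)^2$. Getting the stated $O(\log n/(np))$ would require an improving cycle, or an augmenting path to $B\setminus B_s$, of \emph{bounded} length; but in the only relevant regime, $s=(1-o(1))n$ (as $m=o(n)$), a short degree count shows no such bounded-length structure need exist. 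This is exactly why the route I would actually take to \eqref{Cnm=} is the expectation computation of the first paragraph rather than a pointwise edge bound.
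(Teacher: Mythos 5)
Your first paragraph contains a sign error that is fatal to the whole approach. If $M_{r+1}$ is the optimal matching of $A_{r+1}$ into $B$ and you delete the edge $e^*=(a_{r+1},\phi_{r+1}(a_{r+1}))$, you obtain a matching of $A_r$ into $B$ of cost $C(n,r+1)-W(e^*)$. Since $C(n,r)$ is the \emph{minimum} over such matchings, this gives
$$C(n,r)\ \le\ C(n,r+1)-W(e^*),\qquad\text{i.e.}\qquad C(n,r+1)-C(n,r)\ \ge\ W\bigl(a_{r+1},\phi_{r+1}(a_{r+1})\bigr),$$
the reverse of what you wrote. The increment is bounded \emph{below}, not above, by the cost of the matching edge at $a_{r+1}$; intuitively, inserting $a_{r+1}$ forces a reshuffle that can only make the rest of the assignment costlier, on top of paying for $e^*$. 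So the exchangeability computation bounds $\E{C(n,r+1)-C(n,r)}$ from below by $\tfrac1{r+1}\E{C(n,r+1)}$, which is useless for \eqref{Cnm=}. The correct upper bound on the increment is by the cost of \emph{some} augmenting path from $a_{r+1}$, not by any single edge of $M_{r+1}$.

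Your last paragraph diagnoses the right issue but draws the wrong conclusion. The obstruction — a length-$\Theta(\log n/\log\log n)$ augmenting cycle whose edges each cost up to $W^\star=K\log n/(np)$ — is an artifact of working in a cost-threshold subgraph $G^\star\sim G_{n,n,q}$ with $q\approx K\log n/n$, whose degree (and hence expansion ratio) is $\Theta(\log n)$. The paper instead builds the alternating-path digraph $\vG_r$ from the $40$ cheapest out-edges at each vertex together with the (negatively weighted) matching edges. This trades a \emph{constant} expansion ratio (Lemma~\ref{cl1} shows $|N(S)|\ge 4|S|$, so the unweighted alternating diameter is $O(\log n)$) for individually far cheaper forward edges: the $k$-th cheapest of $\Theta(np)$ independent $E(1)$ costs is of order $k/(np)$, so each of the $O(\log n)$ forward edges costs $O(1/(np))$ rather than $O(\log n/(np))$, and the backward matching edges only help. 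The total weight of the path is then $O(\log n/(np))$, with the tail controlled via a moment bound on sums of order statistics of exponentials (Lemma~\ref{lemFG}, then Lemma~\ref{shortpaths}). So the pointwise bound in the Lemma does hold; the route through a fixed cost threshold is simply the wrong graph to run the expansion argument in.
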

This will prove that
$$0\leq \E{C(n,n)-C(n-m+1)}=O\bfrac{m\log n}{np}=O\bfrac{n}{\om^{1/2}np}=o\bfrac{1}{p}$$
and complete the proof of \eqref{Cnm=} and hence Theorem \ref{th1}.
\subsection{Proof of Lemma \ref{lem3}}\label{pol3}
Let $w(e)$ denote the weight of edge $e$ in $G$. Let $V_r=A_{r+1}\cup B$ and let $G_r$ be the subgraph of $G$ induced by $V_r$. For a vertex $v\in V_r$ order the neighbors $u_1,u_2,\ldots,$ of $v$ in $G_r$ so that $w(v,u_i)\leq w(v,u_{i+1})$. Define the $k$-neighborhood $N_k(v)=\set{u_1,u_2,\ldots,u_k}$. 

Let the $k$-neighborhood of a set be the union of the $k$-neighborhoods of its vertices. In particular, for $S \subseteq A_{r+1}$, $T \subseteq B$,
\begin{align} 
N_k(S) & =\{b\in B:\;\exists a\in S:y\in N_k(a)\},
\label{NS}
\\
N_k(T) & = \{a\in A_{r+1}:\;\exists b\in T:a\in N_k(b)\}.
\label{NT}
\end{align}

Given a function $\f$ defining a matching $M$ of $A_r$ into $B$, we define the following digraph: let $\vG_r=(V_r,\vX)$ where $\vX$ is an orientation of 
\begin{multline*}
X=\\
\set{\set{a,b}\in G: a\in A_{r+1},b\in N_{40}(a)}\cup \set{\set{a,b}\in G: b\in B,a\in N_{40}(b)}\cup\set{(\f(a_i),a_i):i=1,2,\ldots,r}.
\end{multline*}
An edge $e\in M$ is oriented from $B$ to $A$ and has weight $w_r(e)=-w(e)$. The remaining edges are oriented from $A$ to $B$ and have weight equal to their weight in $G$.

The arcs of directed paths in $\vG_r$ are alternately forwards $A\to B$ and backwards $B\to A$ and so they correspond to alternating paths with respect to the matching $M$. It helps to know (Lemma \ref{cl1}, next) that given $a\in A_{r+1},b\in B$ we can find an alternating path from $a$ to $b$ with $O(\log n)$ edges. The $ab$-diameter will be the maximum over $a\in A_{r+1},b\in B$ of the length of a shortest path from $a$ to $b$.
\begin{lemma}\label{cl1}
W.h.p., for every $\f$, the (unweighted) $ab$-diameter of $\vG_r$ is at most $k_0=\rdup{3\log_4n}$.
\end{lemma}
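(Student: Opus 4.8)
\emph{Proposed proof.}
The plan is a two-sided breadth-first search (BFS) in $\vG_r$, meeting in the middle. Recall the arcs of $\vG_r$: each $a\in A_{r+1}$ has arcs $a\to b$ for $b\in N_{40}(a)$; each $b\in B$ has arcs $a\to b$ for $a\in N_{40}(b)$; call the union of these two families the cheap subgraph $H$, oriented $A\to B$; finally the matching contributes the arcs $\f(a_i)\to a_i$, oriented $B\to A$. The asymmetry to exploit is that $\f$ is an \emph{injection} $A_r\to B_r$: a matching arc traversed from $B$ to $A$ (that is, $\f^{-1}$) can fail only on the $n-r\le m$ vertices of $B\setminus B_r$, while traversed from $A$ to $B$ (that is, $\f$) it loses nothing apart from the single vertex $a_{r+1}$.

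I would first establish, once and for all, the expansion of $H$. For $a\in A_{r+1}$ the set $N_{40}(a)$ is distributed as a uniformly random $40$-subset of $B$ (a uniform $40$-subset of the $p$-random neighbourhood of $a$ is, by symmetry, a uniform $40$-subset of $B$), these sets are independent over $a$, and symmetrically the $N_{40}(b)\subseteq A_{r+1}$ ($b\in B$) are independent uniform $40$-subsets of $A_{r+1}$. Hence for fixed sets $S$ (size $s$) and $U$ (size $t$) on opposite sides, $\Pr\brac{N_{40}(a)\subseteq U\text{ for all }a\in S}\le (t/n)^{40s}$, and a union bound over $S$ and $U$ yields: w.h.p., for every $S$ on either side, $|N_{40}(S)|\ge 10|S|$ when $|S|\le n/40$ and $|N_{40}(S)|\ge\tfrac9{10}n$ when $|S|\ge n/5$. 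The $40$-fold exponent comfortably dominates the two binomial coefficients; the first bound is vacuous when $|S|\le 4$, and for larger $S$ the failure probability is $n^{-\Omega(|S|)}$, so the union bound is genuinely $o(1)$.

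Now run BFS forward from $a$ and backward from $b$. Backward, the $A\to B$ step uses $\f$ and is essentially lossless, so (applying the expansion bound to reverse-$H$ neighbourhoods) each $H$-step multiplies the frontier by at least $10$ up to size $n/40$, after which $O(1)$ more steps bring it to size $\ge\tfrac9{10}n$ on the $A$-side; this costs $2\log_{10}n+O(1)$ levels. Forward, the $B\to A$ step uses $\f^{-1}$ and sheds at most $m=o(n)$ vertices; this is harmless once the frontier exceeds $2m$, and for the initial stretch one uses the dichotomy that while the BFS has not yet reached $B\setminus B_r$ its $B$-frontier lies inside $B_r$, so the step is an injection (equivalently, a breadth-first exposure shows the fresh darts from new frontier vertices hit $B\setminus B_r$ with probability only $\le m/n$). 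So the forward frontier also reaches size $\ge\tfrac9{10}n$ on the $A$-side within $2\log_{10}n+O(1)$ levels. Two subsets of $A$ of sizes summing to more than $n$ must intersect, and splicing the forward path to $a$ with the reversed backward path from $b$ produces a directed $a\to b$ path of length $\le 4\log_{10}n+O(1)\le k_0$.

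The step I expect to be the main obstacle is the forward matching steps and their entanglement with $\f$: the $\le m$ loss is negligible only once the frontier is $\gg m$, and getting there from a single vertex forces one to control how $N_{40}(\cdot)$ meets $B\setminus B_r$, a set which is itself determined by $\f$. For the matching $\f=\f_r$ that is used here this is exactly where Lemma~\ref{lem1} enters: $B\setminus B_r$ is then a uniformly random set of size $n-r\le m$, so $N_{40}(a)\cap B_r\ne\emptyset$ for every $a$ with room to spare, and the dichotomy above closes the gap. Carrying this through, and absorbing the $O(1)$ slack so the length stays $\le k_0=\rdup{3\log_4 n}$, is the bulk of the work; the expansion bound, a routine first-moment estimate, is the other place to watch the constants, since the BFS can afford a per-step blow-up of $10$ but needs at least $4^{4/3}\approx 6.35$.
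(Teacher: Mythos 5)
Your approach is the same as the paper's: a two-sided BFS in $\vG_r$ driven by the expansion of the $N_{40}$-neighbourhoods, which the paper establishes by a first-moment bound essentially identical to yours (the paper's expansion factor is $4$ rather than $10$, but that difference is cosmetic for a $k_0 = O(\log n)$ bound). You have also correctly isolated the one genuinely delicate point. The paper iterates $S_i=\f^{-1}(N(S_{i-1}))$ and asserts $|S_i|\geq 4|S_{i-1}|$ directly from the expansion bound on $N(S_{i-1})$; but $\f^{-1}$ is defined only on $B_r=\f(A_r)$, so in fact $|S_i|=|N(S_{i-1})\cap B_r|\geq |N(S_{i-1})|-(n-r)$, and $n-r$ can be as large as $m\gg 40$. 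That loss can already kill the iteration at the first step from $S_0=\{a\}$, and the paper's proof does not address it. (On the $T$-side, as you also observe, the only vertex $\f$ cannot see is $a_{r+1}$, so that iteration really does lose at most one vertex per step.)

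However, your proposed repair does not close this gap. You invoke Lemma~\ref{lem1} to treat $B\setminus B_r$ as a uniformly random small set, but Lemma~\ref{lem1} is a statement about the image of the \emph{cost-minimizing} matching $\f_r$, whereas Lemma~\ref{cl1} is stated — and in Lemma~\ref{shortpaths} and its application must be used — for \emph{every} $\f$. That quantifier is not decorative: the lemma is ultimately applied to the current optimal matching $M_r$, which is a random function of the edge weights, and proving a worst-case bound over all $\f$ is precisely how the paper sidesteps conditioning on that dependence. Once uniformity over $\f$ is required, no distributional statement about $B_r$ is available, and an adversary choosing $\f$ after the costs are revealed can place $B\setminus B_r$ to swallow $N_{40}(a)$ outright; a union bound over the $\binom{n}{m}$ candidate sets against the $(m/n)^{40}$ per-vertex probability does not converge. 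Your ``dichotomy'' sketch inherits the same flaw: to argue the $B$-frontier stays inside $B_r$ until it exceeds $m$, you would again need a probability estimate holding simultaneously for all $\f$.

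So: right approach, correctly diagnosed weak point, but the patch reaches for the wrong lemma. A genuine repair must avoid pushing the $a$-side BFS through $\f^{-1}$ while its frontier is still small — for instance by noting that whenever $N(S_{i-1})$ meets $B\setminus B_r$ one has already found an alternating walk from $a$ to an $M$-exposed vertex, which is exactly what the augmenting-path application of Lemma~\ref{shortpaths} requires — rather than appeal to any randomness in $B_r$.
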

\begin{proof}
For $S\subseteq A_{r+1}$, $T \subseteq B$, let 
\begin{align*}
N(S) &=\{b\in B:\;\exists a\in S\text{ such that } (a,b)\in \vX\},\\
N(T) &=\{a\in A_{r+1}:\;\exists b\in T\text{ such that } (a,b)\in \vX\} . 
\end{align*}
We first prove an expansion property: that \whp,
for all $S\subseteq A_{r+1}$ with $|S|\leq \rdup{n/5}$,
$|N(S)|\geq 4|S|$. 
(Note that $N(S),N(T)$ involve edges oriented from $A$ to $B$ and so do not depend on $\f$.)
\begin{align}\label{ohoh}
\Pr(\exists S: \; |S|\leq \rdup{n/5}, \, |N(S)| < 4|S|)
&\leq o(1)+\sum_{s=1}^{\rdup{n/5}}\binom{r+1}{s}\binom{n}{4s}
\left(\frac{\binom{4s}{40}}{\binom{n}{40}}\right)^s  \notag \\
&\leq\sum_{s=1}^{\rdup{n/5}}\left(\frac{ne}{s}\right)^s
\left(\frac{ne}{4s}\right)^{4s}\left(\frac{4s}{n}\right)^{40s}\notag\\
&=\sum_{s=1}^{\rdup{n/5}}\left(\frac{e^54^{36}s^{35}}{n^{35}}\right)^{s}\notag\\
&=o(1).
\end{align}
%gbs
{\bf Explanation:} 
{\em The $o(1)$ term accounts for the probability that each vertex has at least 40 neighbors in $\vG_r$. Condition on this. Over all possible ways of choosing $s$ vertices and $4s$ ``targets'', we take the probability that for each of the $s$ vertices, all 40 out-edges fall among the $4s$ out of  the $n$ possibilities.}

Similarly, w.h.p., for all $T\subseteq B$ with $|T|\leq \rdup{n/5}$, $|N(T)|\geq 4|T|$. Thus by the union bound, w.h.p. both these events hold. In the remainder of this proof we assume that we are in this ``good'' case, in which all small sets $S$ and $T$ have large vertex expansion.

Now, choose an arbitrary $a \in A_{r+1}$, and define $S_0,S_1,S_2,\ldots$ as the endpoints of all alternating paths starting from $a$ and of lengths $0,2,4,\dots$.  That is,
$$S_0=\{a\}\mbox{ and }S_i=\f^{-1}(N(S_{i-1})).$$ 
Since we are in the good case, $|S_i|\geq 4|S_{i-1}|$ provided $|S_{i-1}|\leq n/5$, and so there exists a smallest index $i_S$ such that $|S_{i_S-1}| > n/5$, and $i_S-1 \leq \log_4 (n/5) \leq \log_4 n -1$.
Arbitrarily discard vertices from $S_{i_S-1}$ to create a smaller set $S'_{i_S-1}$ with $|S'_{i_S-1}| = \rdup{n/5}$, so that $S'_{i_S} = N(S'_{i_S-1})$ has cardinality $|S'_{i_S}| \geq 4 |S'_{i_S-1}| \geq 4n/5$.

Similarly, for an arbitrary $b \in B$, define $T_0,T_1,\ldots,$ by 
$$T_0=\{b\}\text{ and }T_i=\f(N(T_{i-1})).$$ 
Again, we will find an index $i_T \leq \log_4 n$ whose modified set has cardinality $|T'_{i_T}| \geq 4n/5$.

With both $|S'_{i_S}|$ and $|T'_{i_T}|$ larger than $n/2$, there must be some $a' \in S'_{i_S}$ for which $b'=\f(a') \in T'_{i_T}$. This establishes the existence of an alternating walk and hence (removing any cycles) an alternating path of length at most $2(i_S+i_T) \leq 2 \log_4 n$ from $a$ to $b$ in $\vG_r$.
\end{proof}

We will need the following lemma, 
\begin{lemma}\label{lemFG}
Suppose that $k_1+k_2+\cdots+k_M\leq a\log N$, and $X_1,X_2,\ldots,X_M$ are independent random variables with $Y_i$ distributed as the $k_i$th minimum of $N$ independent exponential rate one random variables. If $\m>1$ then
$$\Pr\left(X_1+\cdots+X_M\geq \frac{\m a \log N}{N-a\log N}\right)\leq N^{a(1+\log \m-\m)}.$$
\end{lemma}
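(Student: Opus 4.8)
The plan is to reduce the sum $X_1+\cdots+X_M$ to a weighted sum of independent rate‑one exponentials via the classical order‑statistics identity, and then to apply a standard Chernoff bound.

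First I would recall the R\'enyi representation of exponential order statistics: if $E_1,\dots,E_N$ are i.i.d.\ rate‑one exponentials with order statistics $E_{(1)}\le\cdots\le E_{(N)}$, then
$$E_{(k)}\ \stackrel{d}{=}\ \sum_{j=1}^{k}\frac{E_j}{N-j+1},$$
a sum of independent exponentials of rates $N,N-1,\dots,N-k+1$; this follows by repeatedly invoking memorylessness (the minimum of $N-j+1$ i.i.d.\ rate‑one variables is exponential of rate $N-j+1$, and conditioning on it leaves $N-j$ i.i.d.\ rate‑one variables). Applying this to each $X_i$ and using the independence of the $X_i$, one obtains
$$X_1+\cdots+X_M\ \stackrel{d}{=}\ \sum_{i=1}^{M}\sum_{j=1}^{k_i}c_{i,j}E_{i,j},\qquad c_{i,j}=\frac1{N-j+1},$$
where the $E_{i,j}$ are i.i.d.\ rate‑one exponentials and the total number of terms is $K:=k_1+\cdots+k_M\le a\log N$. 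Since $j\le k_i\le K\le a\log N<N$, every coefficient satisfies $0<c_{i,j}\le 1/(N-a\log N)=:C$. This uniform bound on the coefficients is exactly where the hypothesis $\sum_i k_i\le a\log N$ is used.

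Now I would run the exponential Chernoff argument. For $0<s<1/C=N-a\log N$, using $\Ex[e^{scE}]=(1-sc)^{-1}$ for a rate‑one exponential $E$ and the fact that $1\le(1-sc_{i,j})^{-1}\le(1-sC)^{-1}$ for each $(i,j)$,
$$\Ex\!\left[e^{s(X_1+\cdots+X_M)}\right]=\prod_{i,j}\frac1{1-sc_{i,j}}\le\left(\frac1{1-sC}\right)^{K}\le\left(\frac1{1-sC}\right)^{a\log N},$$
the last inequality because the base is $\ge1$ and $K\le a\log N$ (so no integrality assumption on $a\log N$ is needed). Writing $\sigma=sC\in(0,1)$ and applying Markov's inequality to $e^{s(X_1+\cdots+X_M)}$ at the threshold $u=\mu a\log N/(N-a\log N)$, for which $su=\sigma\mu a\log N$, gives
$$\Pr\!\left(X_1+\cdots+X_M\ge u\right)\le e^{-su}(1-\sigma)^{-a\log N}=\left(\frac{e^{-\sigma\mu}}{1-\sigma}\right)^{a\log N}.$$
It remains to minimize $g(\sigma):=-\sigma\mu-\log(1-\sigma)$ over $\sigma\in(0,1)$; since $g'(\sigma)=-\mu+(1-\sigma)^{-1}$ vanishes at $\sigma=1-1/\mu$, which lies in $(0,1)$ precisely because $\mu>1$, and $g$ is convex, the minimum value is $g(1-1/\mu)=\log\mu+1-\mu$, i.e.\ $e^{-\sigma\mu}/(1-\sigma)=\mu e^{1-\mu}$ at the optimum. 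Hence
$$\Pr\!\left(X_1+\cdots+X_M\ge\frac{\mu a\log N}{N-a\log N}\right)\le\left(\mu e^{1-\mu}\right)^{a\log N}=N^{a(1+\log\mu-\mu)},$$
as required.

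I do not expect a serious obstacle here: the argument is a routine Chernoff estimate once the order‑statistics identity is in hand. The points that need care are (i) invoking the R\'enyi representation correctly so that the $X_i$ become an explicit weighted sum of independent exponentials with deterministic rates, (ii) the uniform coefficient bound $c_{i,j}\le 1/(N-a\log N)$, and (iii) keeping the moment‑generating‑function exponent at $K\le a\log N$ rather than rounding, which is what lets the bound hold for non‑integer $a\log N$. The final single‑variable convex optimization is where the hypothesis $\mu>1$ enters.
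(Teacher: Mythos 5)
Your proof is correct, and it takes a noticeably different (and arguably cleaner) route than the paper's. The paper derives the moment generating function of each $X_i$ indirectly: it writes down the density of the $k$th order statistic, bounds the $i$th moment by
$$\E{Y_{(k)}^i}\le\Bigl(1+O\bigl(k^2/N\bigr)\Bigr)\frac{k(k+1)\cdots(i+k-1)}{(N-k+1)^i},$$
sums the resulting series to get $\E{e^{tY_{(k)}}}\le(1+O(k^2/N))\bigl(1-\tfrac{t}{N-k+1}\bigr)^{-k}$, and then multiplies over $i$. You instead invoke the R\'enyi representation $E_{(k)}\stackrel{d}{=}\sum_{j=1}^k E_j/(N-j+1)$, which produces the MGF of each $X_i$ \emph{exactly} as a finite product $\prod_{j=1}^{k_i}(1-s/(N-j+1))^{-1}$ with no error factor, and the uniform coefficient bound $c_{i,j}\le 1/(N-a\log N)$ then gives the paper's estimate
$\E{e^{sZ}}\le(1-sC)^{-a\log N}$ directly. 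From that point on the two proofs coincide: the same Chernoff--Markov step, the same change of variable, and the same one-dimensional convex minimization at $\sigma=1-1/\mu$, using $\mu>1$ exactly where the paper does. The advantage of your version is that it sidesteps the $(1+O(k^2/N))$ factors, which the paper silently drops when passing from the per-$X_i$ bound to the product bound (a harmless but slightly loose step); your argument is exact up to the final Chernoff inequality. You also correctly flag the minor points that make the bound robust (each $k_i\le a\log N$ individually, the base $\ge 1$ so $K\le a\log N$ suffices without integrality), which the paper glosses over.
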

\begin{proof}
Let $Y_{(k)}$ denote the $k$th smallest of $Y_1,Y_2,\ldots,Y_N$, where we assume that $k=O(\log N)$. Then the density function $f_k(x)$ of $Y_{(k)}$ is
$$f_k(x)=\binom{N}{k}k(1-e^{-x})^{k-1}e^{-x(N-k+1)}$$
and hence the $i$th moment of $Y_{(k)}$ is given by
\begin{align*}
\E{Y_{(k)}^i}&=\int_0^\infty \binom{N}{k}kx^i(1-e^{-x})^{k-1}e^{-x(N-k+1)}dx\\
&\leq \int_0^\infty \binom{N}{k}kx^{i+k-1}e^{-x(N-k+1)}dx\\
&=\binom{N}{k}k\frac{(i+k-1)!}{(N-k+1)^{i+k}}\\
&\leq \brac{1+O\bfrac{k^2}{N}}\frac{k(k+1)\cdots(i+k-1)}{(N-k+1)^i}.
\end{align*}
Thus, if $0\leq t<N-k+1$,
$$\E{e^{tY_{(k)}}}\leq \brac{1+O\bfrac{k^2}{N}}\sum_{i=0}^\infty \bfrac{-t}{N-k+1}^i\binom{-k}{i}=\brac{1+O\bfrac{k^2}{N}}\brac{1-\frac{t}{N-k+1}}^{-k}.$$
If $Z=X_1+X_2+\cdots+X_M$ then if $0\leq t<N-a\log N$,
$$\E{e^{tZ}}=\prod_{i=1}^M\E{e^{tX_i}}\leq \brac{1-\frac{t}{N-a\log N}}^{-a\log N}.$$
It follows that
$$\Pr\left(Z\geq \frac{\m a \log N}{N-a\log N}\right)\leq \brac{1-\frac{t}{N-a\log N}}^{-a\log N} \exp\set{-\frac{t\m a\log N}{N-a\log N}}.$$
We put $t=(N-a\log N)(1-1/\m)$ to minimise the above expression, giving
$$\Pr\left(Z\geq \frac{\m a \log N}{N-a\log N}\right)\leq (\m e^{1-\m})^{a\log N}.$$
\end{proof}
\begin{lemma}\label{shortpaths}
W.h.p., for all $\f$, the weighted $ab$-diameter of $\vG_r$ is at most $c_1 \frac{\log n}{np}$ for some absolute contant $c_1>0$.
\end{lemma}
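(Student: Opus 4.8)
The plan is to run the two–sided breadth-first search of Lemma~\ref{cl1} and to carry along the weight accumulated on the forward arcs, using Lemma~\ref{lemFG} as the quantitative engine. First the reduction: every matching arc of $\vG_r$ has weight $w_r(e)=-w(e)\le 0$, so along any directed path $P$ (equivalently, alternating path) one has $w_r(P)\le\sum_{e}w(e)$, the sum over the forward arcs $e$ of $P$; moreover deleting a cycle from a walk only deletes forward arcs and hence only decreases this sum. Thus it suffices to show that on a single $\f$-free high-probability event one can exhibit, for every matching $\f$ and every $a\in A_{r+1}$, $b\in B$, a \emph{simple} alternating path from $a$ to $b$ whose forward arcs have total weight at most $c_1\log n/(np)$.

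I would work in the sub-digraph of $\vG_r$ in which each $a\in A_{r+1}$ keeps only its $40$ cheapest out-arcs (the arcs to $N_{40}(a)$) and, on the $b$-side, each $b\in B$ keeps only the arcs coming from $\{a:b\in N_{40}(a)\}$; this is exactly the structure for which the expansion $|N_{40}(S)|\ge4|S|$ of \eqref{ohoh} and its $B$-side analogue $|N_{40}(T)|\ge4|T|$ were (or can be) proved \whp, and — as in Lemma~\ref{cl1} — those expansions do not mention $\f$. Running the BFS forward from $a$ and backward from $b$ therefore produces, for every $\f$, a common vertex reached from both sides, and hence an alternating $a$–$b$ path with at most $k_0$ forward arcs. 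Because at each forward step there are at most $40$ choices, for a \emph{fixed} $\f$ the number of such candidate paths, over all pairs $a,b$, is $40^{k_0}\cdot\mathrm{poly}(n)=n^{O(1)}$.

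Now fix one such candidate path $P$ (after cycle removal, a simple path). Its forward arcs sit at distinct vertices of $A_{r+1}$ and each is among the $40$ cheapest at its tail; by Lemma~\ref{lem2} all $G_r$-degrees are $(1+o(1))np$ \whp, so the forward weights are stochastically dominated by independent order statistics — the $k_i$-th smallest of $N:=(1-o(1))np$ i.i.d.\ $E(1)$ variables — with $\sum_i k_i\le40k_0=\BO{\log n}$. Lemma~\ref{lemFG}, with a large absolute constant $\m$ and $a\log N=40k_0$, then gives that the forward weight of $P$ exceeds $c_1\log n/(np)$ with probability at most $N^{a(1+\log\m-\m)}=\exp\{40k_0(1+\log\m-\m)\}$, which is $n^{-K}$ for any prescribed $K$ once $\m$ is large (note $a\log N=\BO{\log n}$, so this is polynomially small in $n$, not merely in $np$). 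The essential point is that bounding each of the $\le k_0$ forward arcs separately by a $40$th order statistic gives only $\BO{(\log n)^2/(np)}$; Lemma~\ref{lemFG} controls the \emph{sum} of the order statistics through the \emph{total} rank $\BO{\log n}$, which is what yields the sharp $\BO{\log n/(np)}$. A union bound over the $n^{O(1)}$ candidate paths (and over $a,b$) completes the argument for a fixed $\f$; uniformity over all $\f$ is then obtained, as in Lemma~\ref{cl1}, by conditioning on a single $\f$-free event (the degree bounds, the two expansions, and — by the same device as in that proof — the weight estimates), on which the meet-in-the-middle construction delivers a light $a$–$b$ path for every $\f$.

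The hard part will be the second and third steps in combination: obtaining the constant-free $\BO{\log n/(np)}$ bound — not the easy $\BO{(\log n)^2/(np)}$ — \emph{uniformly} over all matchings $\f$. This forces one to keep the family of candidate paths simultaneously (i) insensitive enough to $\f$ to be absorbed into one good event, (ii) polynomial in size, and (iii) short, with total forward rank $\BO{\log n}$, so that Lemma~\ref{lemFG} actually applies. Verifying that the $40$-cheapest-arc sub-digraph retains the Lemma~\ref{cl1} expansion on the $A$-side and that the companion bound $|N_{40}(T)|\ge4|T|$ on the $B$-side holds \whp\ (an \eqref{ohoh}-style first-moment computation) is the main piece of bookkeeping.
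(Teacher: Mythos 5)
Your reduction to bounding only the forward arcs (via $w_r(P)\le\sum_{\text{forward }e}w(e)$) looks like a simplification, but it is exactly where the argument fails, and it is the reason the paper does \emph{not} make that simplification.  For a fixed matching $\f$ your enumeration of BFS-produced paths is indeed $n^{O(1)}$, but that set of paths depends on $\f$ through the backward (matching) arcs.  To get a single $\f$-free good event, as you propose, you would have to show that \emph{every} sequence of at most $k_0+1$ forward arcs (each among the $40$ cheapest at its tail, joined by arbitrary $B$-to-$A$ edges of $G$) has small total forward weight; but the number of such sequences is of order $n\,(40\,np)^{k_0}=n^{\Theta(\log n)}$, while Lemma~\ref{lemFG} only gives a per-sequence failure probability $n^{-c\m}$ for a constant $\m$ (it cannot be pushed beyond a constant without worsening the threshold past $c_1\log n/(np)$).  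The union bound is therefore $n^{\Theta(\log n)-c\m}$ and does not close.  Worse, the $\f$-free event is simply false: taking every forward rank equal to $40$ over $k_0\approx 2.16\log n$ steps already gives a forward sum of about $40 k_0/(np)\approx 86\log n/(np)$ deterministically, so for a small constant $c_1$ there \emph{will} be heavy forward sequences.  Your (i)–(iii) wishlist is thus internally inconsistent as stated.

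What the paper does instead (Lemma~\ref{shortpaths}) is keep the backward matching arcs in the accounting.  It defines $Z_1$ in \eqref{lexp} as the maximum of (forward sum) $-$ (backward sum) over sequences $x_0,y_0,\dots,x_k,y_k$ with each $(x_i,y_i)$ among the $40$ cheapest from $x_i$ and each $\{y_i,x_{i+1}\}\in E(G)$; this dominates the weighted $a$–$b$ distance for \emph{every} $\f$ simultaneously.  The union is then over the roughly $n^{2(k+1)}$ sequences, multiplied by three probability factors: $\approx n^{-(k+1)}$ that each $(x_i,y_i)$ has rank $\rho_i$, $p^{k-1}$ that the backward edges exist, and — crucially — the density $\frac{1}{(k-1)!}\bfrac{y\log n}{np}^{k-1}\,dy$ of the backward edge lengths, integrated against the Lemma~\ref{lemFG} tail for the forward lengths.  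The integral contributes a factor of order $(np)^{-(k-1)}$, which exactly cancels the $n^2(np)^{k-1}$ arising from the sequence count, leaving $n^{2-\z/2}\cdot n^{O(1)}$ after the sum over $k$ and the ranks.  Dropping the backward weights, as you do, discards precisely the factor that makes the $\f$-uniform union bound converge, so the gap in your step ``uniformity over all $\f$ is then obtained \dots by the same device as in that proof'' is genuine and is not a bookkeeping matter: you need the backward-edge density term to be re-inserted into the argument.
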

\begin{proof}
Let 
\begin{equation}\label{lexp}
Z_1=\max\left\{\sum_{i=0}^{k}w(x_i,y_i)-\sum_{i=0}^{k-1}w(y_i,x_{i+1})\right\},
\end{equation}
where the maximum is over sequences $x_0,y_0,x_1,\ldots,x_k,y_k$ where $(x_i,y_i)$ is one of the 40 shortest arcs
leaving $x_i$ for $i=0,1,\ldots,k\leq k_0=\rdup{3\log_4n}$,
%gbs
and $(y_i,x_{i+1})$ is a backwards matching edge.

We compute an upper bound on the probability that $Z_1$ is large.
For any $\z>0$ we have
\begin{multline*}
\Pr\left(Z_1\geq \z\frac{\log n}{np}\right)\leq o(n^{-4})+\sum_{k=0}^{k_0}((r+1)n)^{k+1}\bfrac{1+o(1)}{n}^{k+1}\times p^{k-1}\times\\
\int_{y=0}^\infty \left[\frac{1}{(k-1)!}\lfrac{y\log n}{np}^{k-1}\sum_{\r_0+\r_1+\cdots+\r_{k}\leq 40(k+1)} q(\r_0,\r_1,\ldots,\r_{k};\z+y)\right]dy
\end{multline*}
where 
$$q(\r_0,\r_1,\ldots,\r_{k};\eta)  = \Pr\left(X_0+X_1+\cdots+X_{k}\geq \eta\frac{\log n}{np}\right),$$
$X_0,X_1,\ldots,X_{k}$ are independent and $X_j$ is distributed as the $\r_j$th minimum of $r$ independent exponential random variables. (When $k=0$ there is no term $\frac{1}{(k-1)!}\lfrac{y\log n}{n}^{k-1}$).

{\bf Explanation:} 
{\em The $o(n^{-4})$ term is for the probability that there is a vertex in $V_r$ that has fewer than $(1-o(1))np$ neighbors in $V_r$. We have at most $((r+1)n)^{k+1}$ choices for the sequence $x_0,y_0,x_1,\ldots,x_k,y_k$. The term $\frac{1}{(k-1)!}\lfrac{y\log n}{np}^{k-1}dy$ bounds the probability that the sum of $k$ independent exponentials, $w(y_0,x_1)+\cdots+w(y_{k-1},x_{k})$, is in $\frac{\log n}{np}[y,y+dy]$. (The density function for the sum of $k$ independent exponentials is $\frac{x^{k-1}e^{-x}}{(k-1)!}$.) We integrate over $y$.\\ 
$\frac{(1+o(1))p}{np}$ is the probability that $(x_i,y_i)$ is and edge of $G$ and is the $\r_i$th shortest edge leaving $x_i$, and these events are independent for $0\leq i\leq k$. The factor $p^{k-1}$ is the probability that the $B$ to $A$ edges of the path exist. The final summation bounds the probability that the associated edge lengths sum to at least $\frac{(\z+y)\log n}{np}$.}

It follows from Lemma \ref{lemFG} with $a\leq 3,N=(1+o(1))np,\m=(\z+y)/a$ that if $\z$ is sufficiently large then, for all $y\geq 0$,
$$q(\r_1,\ldots,\r_k;\z+y)  \leq (np)^{-(\z+y)\log n/(2\log np)}=n^{-(\z+y)/2}.$$
Since the number of choices for $\r_0,\r_1,\ldots,\r_k$ is at most $\binom{41k+40}{k+1}$ (the number of positive integral solutions to $a_0+a_1+\ldots+a_{k+1}\leq 40(k+1)$) we have
\begin{align*}
\Pr\left(Z_1\geq \z\frac{\log n}{np}\right)&\leq o(n^{-4})+2n^{2-\z/2}\sum_{k=0}^{k_0}\frac{(\log n)^{k-1}}{(k-1)!}\binom{41k+40}{k+1}\int_{y=0}^\infty y^{k-1}n^{-y/2}dy\\ 
&\leq o(n^{-4})+2n^{2-\z/2}\sum_{k=0}^{k_0}\frac{(\log n)^{k-1}}{(k-1)!}2^{41k+40}\bfrac{2}{\log n}^{k-2}\int_{z=0}^\infty z^{k-1}e^{-z}dz\\ 
&= o(n^{-4})+2^{41}n^{2-\z/2}\log n\sum_{k=0}^{k_0}2^{41k}\\ 
&=o(n^{-4}),
\end{align*}
for $\z$ sufficiently large. 
\end{proof}
Lemma \ref{shortpaths} shows that with probability $1-o(n^{-4})$ in going from $M_r$ to $M_{r+1}$ we can find an augmenting path of weight at most $\frac{c_1\log n}{np}$. This completes the proof of Lemma \ref{lem3} and Theorem \ref{th1}. (Note that to go from w.h.p. to expectation we use the fact that w.h.p. $w(e)=O(\log n)$ for all $e\in A\times B$,)  
\proofend

Notice also that in the proof of Lemmas \ref{cl1} and \ref{shortpaths} we can certainly make the failure probability less than $n^{-K}$ for any constant $K>0$. 
\section{Proof of Theorem \ref{th2}}

Just as the proof method for $K_{n, n}$ in \cite{W1}, \cite{W2} can be modified to apply to $G_{n, n, p}$, the proof for $K_n$ in \cite{W0} can be modified to apply to $G_{n, p}$. 
\subsection{Outline of the proof}
This has many similarities with the proof of Theorem \ref{th1}. The differences are subtle. The first is to let $M_r^*$ be the minimum cost matching of size $r$ after ading a special vertex $v_{n+1}$. It is again important (Lemma \ref{lem10}) to estimate the probability that $v_{n+1}\in M_r^*$. The approach is similar to that for Theorem \ref{th1}, except that we now need to prove separate lower and upper bounds for this probability $P(n,r)$.

\subsection{Proof details}
Consider $G = G_{n, p}$, and denote the vertex set by $V = \set{v_1, v_2, \dots, v_n}$. Add a special vertex $v_{n+1}$ with $E(\l)$-cost edges to all vertices of $V$, and let $G^*$ be the extended graph on $V^*  = V \cup \{v_{n+1}\}$. Say that $v_1, \dots, v_n$ are {\em ordinary}. Let $M_r^*$ be the minimum cost $r$-matching (one of size $r$) in $G^*$, unique with probability one. (Note the change in definition.)  Define $P(n, r)$ as the normalized probability that $v_{n+1}$ participates in $M_r^*$, i.e.
\begin{equation}
P(n, r) = \lim_{\l \rightarrow 0} \frac{1}{\l} \Prob{v_{n+1} \ \mbox{participates in} \ M_r^*}
\end{equation}

Let $C(n, r)$ denote the cost of the cheapest $r$-assignment of $G$. To estimate $C(n, r)$, we will again need to estimate $P(n, r)$, by the following lemma.
\begin{lemma}\label{lem10}
\begin{equation}
\E{C(n, r) - C(n-1, r-1)} = \frac{1}{n}P(n, r)
\end{equation}
\end{lemma}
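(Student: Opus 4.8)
The plan is to transcribe the proof of Lemma~\ref{lem10a} (W\"astlund's exponential-derivative identity) to the non-bipartite setting, keeping track of two book-keeping changes. First I would fix an ordinary vertex $v_i$, $i\in[n]$, let $w_i$ denote the cost of the edge $(v_i,v_{n+1})$, and let $Z_i$ denote the cost of the cheapest $(r-1)$-matching of the graph $G-v_i$ obtained by deleting $v_i$ and its incident edges from $G$. If $(v_i,v_{n+1})\in M_r^*$ then the other $r-1$ edges of $M_r^*$ form an $(r-1)$-matching of $G^*\setminus\{v_i,v_{n+1}\}=G-v_i$, and by optimality this must be the cheapest such matching, so $\mathrm{cost}(M_r^*)=w_i+Z_i$. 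Since the cheapest $r$-matching of $G^*$ that uses edge $(v_j,v_{n+1})$ has cost $w_j+Z_j$ and the cheapest one avoiding $v_{n+1}$ has cost $C(n,r)$, the matching $M_r^*$ uses $(v_i,v_{n+1})$ precisely when $w_i+Z_i$ is the (a.s. unique) minimum of $\{w_j+Z_j:j\in[n]\}\cup\{C(n,r)\}$. In particular $\{(v_i,v_{n+1})\in M_r^*\}\subseteq\{w_i<C(n,r)-Z_i\}$, while the difference of these two events forces $w_j<C(n,r)-Z_j$ to hold for some $j\neq i$ as well, an event of probability $O(\l^2)$ as $\l\to0$ because the $w_j$ are independent $E(\l)$ variables.

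Next I would note that $Z_i\le C(n,r)$ always (from an optimal $r$-matching of $G$ delete an edge covering $v_i$, or an arbitrary edge if $v_i$ is uncovered), so, conditioning on all edge costs other than $w_i$, the conditional probability that $w_i<C(n,r)-Z_i$ equals $1-e^{-\l(C(n,r)-Z_i)}=\l(C(n,r)-Z_i)+O(\l^2)$. Taking expectations, summing over $i$, and using the previous paragraph,
\[
\Prob{v_{n+1}\ \mbox{participates in}\ M_r^*}=\l\sum_{i=1}^n\E{C(n,r)-Z_i}+O(\l^2).
\]
Since $G-v_i$ is distributed as $G_{n-1,p}$ with independent $E(1)$ edge costs, $Z_i$ has the same distribution as $C(n-1,r-1)$, so every summand equals $\E{C(n,r)}-\E{C(n-1,r-1)}$ and there are exactly $n$ of them. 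Dividing by $\l$ and letting $\l\to0$ gives $P(n,r)=n\bigl(\E{C(n,r)}-\E{C(n-1,r-1)}\bigr)$, which is the assertion; the passage to the limit under the expectation is licensed by the uniform bound $0\le C(n,r)-Z_i\le C(n,r)$ and the finiteness of $\E{C(n,r)^2}$.

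Two points distinguish this from Lemma~\ref{lem10a}, and they are the only things one really has to get right. The prefactor is $1/n$, not $1/r$, because $v_{n+1}$ can be matched to any one of the $n$ ordinary vertices rather than to one of the $r$ vertices on a prescribed side; and the subtracted term is $C(n-1,r-1)$, not $C(n,r-1)$, because once $v_{n+1}$ and its partner are removed only $n-1$ ordinary vertices remain for the rest of the matching. I expect the main obstacle to be the $O(\l^2)$ accounting --- verifying that the configurations where $M_r^*$ would ``want'' two edges at $v_{n+1}$ (and the measure-zero tie configurations) contribute negligibly relative to $\l$ --- together with the accompanying dominated-convergence step; with all edge costs having finite moments and $r\le n$ fixed, this is routine, and the rest is a direct copy of the bipartite argument.
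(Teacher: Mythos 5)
Your proof is correct and takes the same route as the paper: the paper's own proof of Lemma~\ref{lem10} is a one-paragraph reference back to Lemma~\ref{lem10a}, pointing out exactly the two changes you highlight (the prefactor $1/n$ in place of $1/r$, and the subtracted term $C(n-1,r-1)$ in place of $C(n,r-1)$). You have simply filled in the details of W\"astlund's exponential-derivative argument that the paper leaves implicit, including the careful observation that the cheapest $(r-1)$-matching of $G-v_i$ is only equal in distribution (not identically equal) to $C(n-1,r-1)$, and the dominated-convergence step; all of this is consistent with the intended argument.
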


\begin{proof}
Let $X = C(n, r)$ and $Y = C(n-1, r-1)$. Fix $i\in [n]$ and let $w$ be the cost of the edge $(v_i, v_{n+1})$, and let $I$ denote the indicator variable for the event that the cost of the cheapest $r$-assignment that contains this edge is smaller than the cost of the cheapest $r$-assignment that does not use $v_{n+1}$. The rest of the proof is identical to the proof of Lemma \ref{lem10a}, except that there are now $n$ choices for $i$ as opposed to $r$ in the previous lemma.
\end{proof}
In this case, unlike the bipartite case, we are unable to directly find an asymptotic expression for $P(n, r)$, as we did in Lemma \ref{lem3} and Corollary \ref{corollary1}. Here we will have to turn to bounding $P(n, r)$ from below and above.

\subsection{A lower bound for $P(n, r)$}\label{lowb}

We will consider an algorithm that finds a set $A_s \subseteq V^*$ which contains the set $B_s$ of vertices participating in $M_r^*$, $s = |A_s| \geq |B_s| = 2r$. Call $A_s$ the set of {\em exposed} vertices.

Initially let $A_s = B_s = \emptyset$ and $r = s = 0$. At stage $s$ of the algorithm we condition on

\hspace{1in}$A_s, B_s$ and the existence and cost of all edges within $A_s$.\\ 
\mbox{\hspace{1in}}In particular, we condition on $r$ and the minimum $r$-matching $M_r^*$.

Given a minimum matching $M_r^*$, we decide how to build a proposed $(r+1)$-matching by comparing the following numbers and picking the smallest. 
\begin{itemize}
\item[(a)] $z_a$ equals the cost of the cheapest edge between a pair of unexposed vertices.
\item[(b)] $z_b=\min\{c_1(v) : v\in A_s \setminus B_s\}$, where $c_1(v)$ is the cost of the cheapest edge between $v$ and a vertex $\t_1(v)\notin A_s$.
\item[(c)] $z_c=\min\{c_1(v) + c_1(u) + \d(u, v) : u,v\in B_s\}$ where $\d(u, v)$ denotes the cost of the cheapest alternating path from $u$ to $v$, with the cost of edges in $M_r^*$ taken as the negative of the actual value.
\end{itemize}
Let 
$$z_{\min}=\min\set{z_a,z_b,z_c}.$$

If $z_{\min}=z_a$ then we reveal the edge $\{v, w\}$ and add it to $M_r^*$ to form $M_{r+1}^*$. Once $v, w$ have been determined, they are added to $A_s$ and $B_s$, and we move to the next stage of the algorithm, updating $s \leftarrow s+2, r \leftarrow r+1$.

If $z_{\min}=z_b$ then let $v \in A_s \setminus B_s$ be the vertex with the cheapest $c_1(v)$. We reveal $w=\t_1(v)$ and add $w$ to $A_s$ and to $B_s$ while adding $v$ to $B_s$. Now $M_{r+1}^* = M_r^* \cup \{v, w\}$. We move to the next stage of the algorithm, updating $s \leftarrow s+1, r \leftarrow r + 1$.

If $z_{\min}=z_c$ then reveal $w_1=\t_1(u), w_2=\t_1(v)$. If $w_1 = w_2$, we say that we have a {\em collision}. In this case, the vertex $w_1$ is added to $A_s$ (but not $B_s$), and we move to the next stage with $s \leftarrow s+1$. If there is no collision, we update $M_r^*$ by the augmenting path $w_1, u, \dots, v, w_2$ to form $M_{r+1}^*$. We add $w_1, w_2$ to $A_s$ and $B_s$, and move on to the next stage with $s \leftarrow s + 2$ and $r \leftarrow r+1$.

It follows that $A_s\setminus B_s$ consists of unmatched vertices that have been the subject of a collision.

It will be helpful to define $A_s$ for all $s$, so in the cases where two vertices are added to $A_s$, we add them sequentially with a coin toss to decide the order.

The possibility of a collision is the reason that not all vertices of $A_s$ participate in $M_r^*$. However, the probability of a collision at $v_{n+1}$ is $O(\l^2)$, and as $\l \rightarrow 0$ this is negligible. In other words, as $\l \to 0$,
\begin{equation}
\Prob{v_{n+1} \in M_r^*} \geq \Prob{v_{n+1} \in B_{2r}} = \Prob{v_{n+1} \in A_{2r}} - O(\l^2)
\end{equation}
and we will bound $\Prob{v_{n+1} \in A_{2r}}$ from below.

\begin{lemma}
Conditioning on $v_{n+1} \notin A_s$, $A_s$ is a random $s$-subset of $V$.
\end{lemma}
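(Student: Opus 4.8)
The statement to prove is that conditioning on $v_{n+1}\notin A_s$, the set $A_s$ is a uniformly random $s$-subset of $V$.

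The plan is to argue by induction on $s$, exploiting the symmetry of the construction among the ordinary vertices $v_1,\dots,v_n$, exactly in the spirit of Lemma~\ref{lem1}. The base case $s=0$ is trivial since $A_0=\emptyset$. For the inductive step, suppose that conditioned on $v_{n+1}\notin A_s$ the set $A_s$ is uniform over $s$-subsets of $V$. At stage $s$ the algorithm reveals some information about edges incident to $A_s$ and then adds one or two new vertices to $A_{s}$ to form the next exposed set. The key observation is that the rule for choosing which new vertices to expose --- whichever of $z_a,z_b,z_c$ is smallest, and then the corresponding $\t_1(\cdot)$ or the endpoint of the cheapest augmenting path --- is invariant under any permutation $\pi$ of the ordinary vertices that fixes $A_s$ setwise: permuting the costs $w(v_i,v_j)\mapsto w(\pi v_i,\pi v_j)$ leaves the joint law of all edge costs unchanged (they are i.i.d.\ $E(1)$ on present edges, with presence itself i.i.d.), and it carries the whole execution of stage $s$ to the execution on the permuted instance. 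Hence, conditioned on $A_s$ and on $v_{n+1}\notin A_{s+1}$ (respectively $A_{s+2}$), the newly added vertex (or pair) is uniform among the admissible vertices of $V\setminus A_s$.

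Concretely, I would fix the history $\mathcal H_s$ of the first $s$ stages, which by the induction hypothesis determines $A_s$, and let $\pi$ be a permutation of $[n]$ fixing $A_s$. Writing $L$ for the array of edge-costs (with $\infty$ for absent edges) and $L_\pi$ for its relabelling, $L$ and $L_\pi$ have the same distribution, and the deterministic map ``run the algorithm'' commutes with relabelling on the event $v_{n+1}\notin A_\bullet$; also the auxiliary coin tosses used to order simultaneously-added vertices are independent of everything. Therefore for any two $(s{+}1)$- or $(s{+}2)$-subsets $X,Y$ of $V$ extending $A_s$ by the appropriate number of vertices, $\Pr(A_{s+1}=X\mid \mathcal H_s, v_{n+1}\notin A_{s+1}) = \Pr(A_{s+1}=Y\mid\mathcal H_s, v_{n+1}\notin A_{s+1})$, and similarly in the two-vertex cases. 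Since the conditional distribution of $A_{s+1}$ given $\mathcal H_s$ depends on $\mathcal H_s$ only through $A_s$, and $A_s$ is uniform by induction, a short averaging argument (choosing $\mathcal H_s$ uniformly among histories with a given $A_s$ and then $A_s$ uniformly) shows $A_{s+1}$ is uniform over all subsets of $V$ of its size, still conditioned on $v_{n+1}\notin A_{s+1}$. One must be slightly careful that the \emph{size} $s+1$ versus $s+2$ of the new exposed set is itself random (it depends on whether a collision occurs and on which of $z_a,z_b,z_c$ wins), but conditioning additionally on this size is harmless: the same permutation argument applies within each size class.

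The main obstacle, and the point that needs the most care, is the collision case (c): when $w_1=\t_1(u)=\t_1(v)=w_2$, only one vertex $w_1$ is added, and one must check that conditioning on the occurrence of a collision does not break the symmetry --- i.e.\ that the identity of the colliding vertex $w_1$ is still uniform over the eligible vertices of $V\setminus A_s$. This again follows from permutation-invariance, since a permutation fixing $A_s$ maps collisions to collisions and the cheapest alternating path to the cheapest alternating path, but it is the place where one has to be scrupulous that no extra information about the labels of $V\setminus A_s$ has leaked out: all revealed edge costs and path costs are symmetric functions of the unexposed labels given $A_s$. Once this is in hand, the induction closes and the lemma follows.
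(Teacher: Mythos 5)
Your proof is correct and rests on the same underlying idea as the paper's: the algorithm only exposes edges with both endpoints in $A_s$, so the unexposed vertices of $V\setminus A_s$ remain exchangeable, and the newly exposed vertex is conditionally uniform over them. The only difference is packaging. The paper's own proof of this lemma runs the induction by conditioning on the neighborhood sizes $d_s(v)=|N_s(v)|$ and observing that $N_s(v)$ is then a uniformly random $d_s(v)$-subset of $V\setminus A_s$; you instead invoke the global permutation-invariance of the cost array $L$, which is exactly the argument the paper uses for Lemma~\ref{lem1} and (in this section) for the companion lemma that $A_{2r}$ is a random $2r$-subset. One small slip: you should take $\pi$ to fix $A_s$ \emph{pointwise} (equivalently, to permute only $V\setminus A_s$), not merely setwise, since you have already conditioned on the realized history $\mathcal{H}_s$ and a nontrivial permutation within $A_s$ would not preserve it. Your explicit treatment of the collision case (c) is welcome --- the paper's proof is terse there --- and your resolution (a permutation of $V\setminus A_s$ carries collisions to collisions, so the colliding vertex is still uniform) is exactly right.
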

\begin{proof}
Trivial for $s = 0$. Suppose $A_{s-1}$ is a random $(s-1)$-subset of $V$. Define $N_s(v) = \{w \notin A_s : (v, w) \in E\}$. In stage $s$, if we condition on $d_s(v) = |N_s(v)|$, then under this conditioning $N_s(v)$ is a random $d_s(v)$-subset of $V \setminus A_s$. This is because the constructon of $A_s$ does not require the edges from $A_s$ to $V\setminus A_s$ to be exposed. So, if $A_s \setminus A_{s-1} = \{w\}$ where $w$ is added due to being the cheapest unexposed neighbor of an exposed $v$, then $w$ is a random element of $N_s(v)$ and hence a random element of $V \setminus A_s$.

If we are in case (a), i.e. $M_{r+1}^*$ is formed by adding an edge between two ordinary unexposed vertices $v, w$, then since we only condition on the size of the set $\{(v, w) : v, w \notin A_s\}$, all pairs $v,w \in V \setminus A_s$ are equally likely, and after a coin toss this can be seen as adding two random elements sequentially. We conclude that $A_s$ is a random $s$-subset of $V$.
\end{proof}

Recall that $m = n / (\om^{1/2} \log n)$.
\begin{corollary}\label{cor2}
W.h.p., for all $0 \leq s \leq n-m$ and all $v \in V$,
\begin{equation}
|d_s(v) - (n-s)p| \leq \om^{-1/5}(n-s)p.
\end{equation}
\end{corollary}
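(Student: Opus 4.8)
The plan is to copy the proof of Lemma~\ref{lem2} almost verbatim, with the preceding (unlabelled) lemma playing the role there played by Lemma~\ref{lem1}. Fix $s$ with $0\le s\le n-m$ and a vertex $v\in V$. Conditioning on $v_{n+1}\notin A_s$, the preceding lemma says that $A_s$ is a uniformly random $s$-subset of $V$; moreover, as is noted in its proof, the construction of $A_s$ never exposes an edge joining a vertex to $V\setminus A_s$. Hence, given $A_s$, the potential edges from $v$ to the vertices of $V\setminus A_s$ are still independent $\mathrm{Bernoulli}(p)$, so $d_s(v)=|N_s(v)|$ is distributed exactly as $\mathrm{Bin}(n-s,p)$ or $\mathrm{Bin}(n-s-1,p)$ according to whether $v\in A_s$ or not; in either case its mean is $(1+o(1))(n-s)p$.

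Next I would apply the Chernoff bounds \eqref{chern1} and \eqref{chern2} with $\e=\om^{-1/5}$, obtaining for each such $v$ and $s$
$$\Pr\brac{|d_s(v)-(n-s)p|\ge \om^{-1/5}(n-s)p}\le 2e^{-\om^{-2/5}(n-s)p/3}.$$
Since $s\le n-m$ forces $(n-s)p\ge mp=\om^{1/2}\log n$ (using $m=n/(\om^{1/2}\log n)$ and $np=\om(\log n)^2$), the right-hand side is at most $2n^{-\om^{1/10}/3}$. A union bound over the $n$ vertices $v$ and the at most $n$ values of $s$ gives total failure probability at most $2n^{2-\om^{1/10}/3}=o(1)$, and in fact this can be made $\le n^{-K}$ for any fixed $K$ once $n$ is large.

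It then remains to dispose of the conditioning on $v_{n+1}\notin A_s$. Since $P(n,r)$ is extracted in the limit $\l\to 0$ and $\Pr(v_{n+1}\in A_s)=O(\l)$ uniformly over the relevant stages, the unconditioned degree statement differs from the conditioned one by an amount tending to $0$; alternatively, on the event $v_{n+1}\in A_s$ one checks from the same proof that $A_s$ with $v_{n+1}$ removed is a uniformly random $(s-1)$-subset of $V$ and reruns the estimate with $n-s$ replaced by $n-s+1$, the discrepancy being swallowed by the $\om^{-1/5}$ slack.

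The delicate point, and the one I expect to be the main obstacle, is not the Chernoff computation, which is word for word the one in Lemma~\ref{lem2}, but the verification that at stage $s$ the edges from a fixed $v$ into $V\setminus A_s$ really are unexposed: one must be sure that the comparisons of $z_a,z_b,z_c$, the resolution of collisions, and the coin tosses used to order simultaneously added vertices have leaked no information about those edges. This is precisely what the preceding lemma guarantees, so once it is in hand the corollary follows.
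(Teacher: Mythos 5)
Your proposal is correct and matches the paper's argument, which is given only as ``This follows from the Chernoff bounds as in Lemma~\ref{lem2}''; you fill in exactly the steps that reference points to (uniform randomness of $A_s$ from the preceding lemma, unexposed edges to $V\setminus A_s$, Chernoff with $\e=\om^{-1/5}$, the lower bound $(n-s)p\ge \om^{1/2}\log n$, and a union bound). Your extra remarks on removing the conditioning $v_{n+1}\notin A_s$ are a reasonable tidying of a point the paper leaves implicit.
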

\begin{proof}
This follows from the Chernoff bounds as in Lemma \ref{lem2}.
\end{proof}
We now bound the probability that $A_s \setminus A_{s-1} = \{v_{n+1}\}$ from below. There are a few different ways this may happen.

We now have to address some cost conditioning issues. Suppose that we have just completed an iteration. First consider the edges between vertices not in $A_s$. For such an edge $e$, all we know is $w(e)\geq \z$ where $\z=z_{\min}$ of the just completed iteration. So the conditional cost of such an edge can be expressed as $\z+E(1)$ or $\z+E(\l)$ in the case where $e$ is incident with $v_{n+1}$. The exponentials are independent. We only need to compare the exponential parts of each edge cost here to decide the probability that an edge incident with $v_{n+1}$ is chosen. 

We can now consider case (a). Suppose that an edge $\{u, v\}$ between unexposed vertices is added to $A_{s-1}$. By Corollary \ref{cor2}, there are at most $p\binom{n-s+1}{2}(1+\om^{-1/5})$ ordinary such edges. There are $n-s$ edges between $v_{n+1}$ and $V \setminus A_s$, each at rate $\l$. As $\l \rightarrow 0$, the probability that one of the endpoints of the edge chosen in case (a) is $v_{n+1}$ is therefore at least
$$
\frac{\l(n-s)}{\l(n-s) + p\binom{n-s+1}{2}(1+\om^{-1/5})} \geq \frac{1}{p} \frac{2\l}{n-s}(1 - \om^{-1/5}) + O(\l^2)
$$
We toss a fair coin to decide which vertex in the edge $\{u, v\}$ goes in $A_s$. Hence the probability that $A_s \setminus A_{s-1} = \{v_{n+1}\}$ in case (a) is at least
$$
\frac{1}{p}\frac{\l}{n-s}(1 - \om^{-1/5}) + O(\l^2).
$$
We may also have $A_s \setminus A_{s-1} = \{v_{n+1}\}$ if case (a) occurs at stage $s-2$ and $v_{n+1}$ loses the coin toss, in which case the probability is at least
$$
\frac{1}{p} \frac{\l}{n-s+1} (1 - \om^{-1/5})+ O(\l^2). 
$$

Now consider case (b). Here only one vertex is added to $A_{s-1}$, the cheapest unexposed neighbor $w$ of some $v \in A_{s-1} \setminus B_{s-1}$. The cost conditioning here is the same as for case (a), i.e. that the cost of an edge is $\z+E(1)$ or $\z+E(\l)$. By Corollary \ref{cor2}, this $v$ has at most $p(n-s+1)(1+\om^{-1/5})$ ordinary unexposed neighbors, so the probability that $w = v_{n+1}$ is at least
$$
\frac{\l}{p(n-s+1)(1+\om^{-1/5}) + \l} = \frac{1}{p} \frac{\l}{n-s+1}(1 - \om^{-1/5}) + O(\l^2).
$$

Finally, consider case (c). To handle the cost conditioning, we condition on the values $c_1(v)$ for $v\in B_s$. By well-known properties of independent exponential variables, the minimum is located with probability proportional to the rates of the corresponding exponential variables. A collision at $v_{n+1}$ has probability $O(\l^2)$, so assume we are in the case of two distinct unexposed vertices $w_1, w_2$. Suppose that $w_1$ is revealed first. Exactly as in (b), the probability that $w_1 = v_{n+1}$ is at least
$$
\frac{1}{p}\frac{\l}{n-s + 1} (1 - \om^{-1/5})+ O(\l^2).
$$
If $w_1 \neq v_{n+1}$, the probability that $w_2 = v_{n+1}$ (i.e. $A_{s+1} \setminus A_s = \{v_{n+1}\}$) is at least
$$
\frac{1}{p}\frac{\l}{n-s}(1 - \om^{-1/5}) + O(\l^2),
$$
so by considering the possibility that $v_{n+1}$ is the second vertex added from $A_{s-2}$, we again have probability at least
$$
\frac{1}{p}\frac{\l}{n-s+1}(1 - \om^{-1/5}) + O(\l^2).
$$

We conclude that no matter which case occurs, the probability is at least 
$$\frac{1}{p}\frac{\l}{n-s+1}(1 - \om^{-1/5}) + O(\l^2).$$
So
\begin{equation}\label{pklowbound}
P(n, r) \geq \lim_{\l\to 0} \frac{1}{\l} \sum_{s = 1}^{2r} \Prob{A_s \setminus A_{s-1} = \{v_{n+1}\}}\geq \frac{1-\om^{-1/5}}{p} \sum_{s=1}^{2r} \frac{1}{n-s+1}.
\end{equation}
Write 
$$L(n, r) = \sum_{s=1}^{2r} \frac{1}{n-s+1}.$$

\subsection{An upper bound for $P(n, r)$}

We now alter the algorithm above in such a way that $A_{2r}=B_{2r}$. We do not consider $A_s$ for odd $s$ here. At a stage with $s = 2r$, we condition on

\hspace{1in}$A_s$, and the appearance and cost of all edges within $A_s$.\\
\mbox{\hspace{1in}}In particular, we condition on $r$ and the minimum $r$-matching $M_r^*$.
\hspace{1in}A set $C_s \subseteq A_s$, where each $v\in C_s$ has been involved in a collision.

This changes how we calculate a candidate for $M_{r+1}^*$. We now take the minimum of
\begin{itemize}
\item[(a)] $z_a$ equals the cost of the cheapest edge between unexposed vertices.
\item[(b)] $z_b=\min\{c_1(u) + c_1(v) + \d(u, v) : u,v \in A_s, |\{u, v\} \cap C_s| \leq 1\}$, where $c_1$ and $\d$ are as defined in Section \ref{lowb}.
\item[(c)] $z_c=\min\{c_1(u) + c_2(v) + \d(u, v) : u,v \in C_s, \t_1(u) = \t_1(v)\}$, where $\t_1$ is defined in Section \ref{lowb} and $c_2(v)$ is the cost of the second cheapest edge between $v$ and a vertex $\t_2(v)\notin A_s$.
\end{itemize}
Let 
$$z_{\min}=\min\set{z_a,z_b,z_c}.$$

If $z_{\min}=z_a$ then we reveal the edge $\{v, w\}$ and add it to $M_r^*$ to form $M_{r+1}^*$. Once $v, w$ have been determined, they are added to $A_s$ and we move to the next stage of the algorithm, updating $s \leftarrow s+2$.

If $z_{\min}=z_b$ then reveal $w_1=\t_1(u), w_2=\t_1(v)$. If $w_1 = w_2$ then we add $u,v$ to $C_s$ and go to the next stage of the algorithm without changing $s$. (The probability that $\t_1(u) = \t_1(v) = v_{n+1}$ is $O(\l^2)$, and we can safely ignore this as $\l \to 0$). If at some later stage $w_1$ is added to $A_s$ and $u$ say is still in $A_s$ then we remove $u$ from $C_s$. If $w_1\neq w_2$ then we update $M_r^*$ by the augmenting path $w_1, u, \dots, v, w_2$ to form $M_{r+1}^*$. We add $w_1, w_2$ to $A_s$, and move on to the next stage with $s \leftarrow s + 2$. 

If $z_{\min}=z_c$ then we update $M_r^*$ by the augmenting path $w_1=\t_1(u), u, \dots, v, w_2=\t_2(v)$ to form $M_{r+1}^*$. We add $w_1, w_2$ to $A_s$, and move on to the next stage with $s \leftarrow s + 2$.

Eventually we will construct $M_{r+1}^*$ since case (b) with $\t_1(u) = \t_1(v)$ can happen at most $s$ times before $C_s = A_s$.

The cost conditioning is the same as we that for computing the lower bound in Section \ref{lowb}, except for the need to deal with $c_2(v),v\in C_s$. For this we condition on $c_2(v)$ and argue that the probability $\d_2(v)=x$ is proportional to the exponential rate for the edge $(v,x)$. At this point we know that $\d_1(v)\neq v_{n+1}$, since we are assuming $\l$ is so small that this possibility can be ignored. So, in this case, we can only add $v_{n+1}$ as $\d_2(v)$ for some $v\in C_s$.

To analyze this algorithm we again need to show that $A_{2r}$ is a uniformly random subset of $V$.
\begin{lemma}
Conditioning on $v_{n+1} \notin A_{2r}$, $A_{2r}$ is a random $2r$-subset of $V$.
\end{lemma}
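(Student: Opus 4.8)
The plan is to establish the result by the same symmetry principle used in the proof of Lemma~\ref{lem1}, adapted to the present algorithm. Note first that, in contrast to the corresponding lemma of Section~\ref{lowb}, here the construction of $A_s$ \emph{does} inspect some edges leaving $A_s$ — it reveals $\t_1(u),\t_1(v)$ to detect collisions in case~(b), and $\t_2(v)$ for $v\in C_s$ in case~(c) — so the bare observation ``these edges are never exposed'' is no longer available. What survives, and is all that is needed, is that every decision the algorithm makes depends only on the edge costs and on independent fair coin tosses, never on the names of the vertices.

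Concretely, let $L$ be the $(n+1)\times(n+1)$ array of edge costs on $V^*$, with $L(i,j)=\infty$ when $\{v_i,v_j\}\notin E$, the entries incident to $v_{n+1}$ being $E(\l)$; note that all $n$ edges at $v_{n+1}$ play symmetric roles. The whole algorithm is a deterministic function of $L$ together with the coin tosses, and it terminates almost surely (as remarked, a collision in case~(b) can recur at most $s$ times before $C_s=A_s$). For a permutation $\p$ of $V$, fixing $v_{n+1}$, let $L_\p$ be the array $L_\p(i,j)=L(\p(i),\p(j))$; running the algorithm on $L_\p$ simply relabels by $\p$ the run on $L$ (each $z_a,z_b,z_c$ and each revealed $\t_1(\cdot),\t_2(\cdot)$ transforms equivariantly, ties having probability zero), so $A_{2r}(L_\p)=\p^{-1}(A_{2r}(L))$ and $v_{n+1}\in A_{2r}(L_\p)\iff v_{n+1}\in A_{2r}(L)$. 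Since $L$ and $L_\p$ have the same law and the coin tosses are independent of both, for every $2r$-subset $X\subseteq V$,
$$\Pr\brac{A_{2r}=X \mid v_{n+1}\notin A_{2r}} = \Pr\brac{A_{2r}=\p(X) \mid v_{n+1}\notin A_{2r}}.$$
As $\p$ ranges over all permutations of $V$ this forces the conditional law of $A_{2r}$ to be uniform on the $2r$-subsets of $V$. (One could instead argue by induction on $r$ as in Section~\ref{lowb}, carrying along the collision set $C_s$ and, for each recorded collision, the identity of the shared external vertex, so that each macro-stage adds zero, one, or two vertices that are conditionally uniform in the unexposed part of $V$; but the symmetry argument above subsumes this bookkeeping.)

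The step needing the most care is verifying this label-equivariance through the collision mechanism: that recording a collision without immediately placing the collision vertex in $A_s$, later promoting that vertex when it is reached by another route, and invoking the second-cheapest external edge $c_2(v)$ for $v\in C_s$, all commute with relabelling $V$; and that the passage $\l\to 0$, which discards the $O(\l^2)$ events in which $v_{n+1}$ is itself a collision vertex or a first choice $\t_1(\cdot)$ that then collides, is taken uniformly over the relevant range of $r$. Once these are checked, the lemma follows, and with it the applicability of Corollary~\ref{cor2} to the sets $A_{2r}$ produced by this algorithm.
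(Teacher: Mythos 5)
Your proposal is correct and follows the same route as the paper: a permutation-symmetry argument on the cost matrix, using that $L$ and $L_\p$ (with $\p$ fixing $v_{n+1}$) are equal in distribution and that the algorithm is label-equivariant, hence $A_{2r}$ is conditionally exchangeable over $2r$-subsets of $V$. The paper's version is considerably terser (it works with the $n\times n$ ordinary-vertex matrix and does not spell out the coin tosses, the $v_{n+1}$-edge symmetry, or the equivariance of the collision bookkeeping), but the underlying argument is identical; your extra care in noting why the ``unexposed edges'' inductive argument of Section~\ref{lowb} is unavailable here is a sound observation, not a deviation.
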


\begin{proof}
Let $L$ denote the $n\times n$ matrix of edge costs, where $L(i,j)=w(v_i,v_j)$ and $L(i,j)=\infty$ if edge $(v_i,v_j)$ does not exist in $G$. For a permutation $\p$ of $V$ let $L_\p$ be defined by $L_\p(i,j)=L(\p(i),\p(j))$. Let $X,Y$ be two distinct $2r$-subsets of $V$ and let $\p$ be any permutation of $V$ that takes $X$ into $Y$. Then we have
$$\Pr(A_{2r}(L)=X)=\Pr(A_{2r}(L_\p)=\p(X))=\Pr(A_{2r}(L_\p)=Y)=\Pr(A_{2r}(L)=Y),$$
where the last equality follows from the fact that $L$ and $L_\p$ have the same distribution. This shows that $A_{2r}$ is a random $2r$-subset of $V$.
\end{proof}

Let $d_{2r}(v) = |\{w \notin A_{2r} : (v, w) \in E\}|$.
\begin{corollary}\label{COR1}
W.h.p., for all $0 \leq r \leq (n - m) / 2$ and all $v \in V$,
\begin{equation*}
|d_{2r}(v) - p(n-2r)| \leq \om^{-1/5}p(n-2r).
\end{equation*}
\end{corollary}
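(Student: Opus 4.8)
The plan is to prove this in the same way as Lemma~\ref{lem2} and Corollary~\ref{cor2}, now feeding in the exchangeability statement established in the lemma immediately above: conditioned on $v_{n+1}\notin A_{2r}$, the set $A_{2r}$ is a uniformly random $2r$-subset of $V$, and hence $V\setminus A_{2r}$ is a uniformly random $(n-2r)$-subset of $V$. Fix an ordinary vertex $v$ and a value $r$ with $0\le r\le(n-m)/2$, and condition on $v\notin A_{2r}$ and on the value of $A_{2r}$. The construction never individually reveals the presence or absence of an edge both of whose endpoints lie outside the current exposed set — all it ever records about such a pair is a common lower bound on its cost — so under this conditioning $d_{2r}(v)$ is distributed as $B(n-2r-1,p)$, which is $B(n-2r,p)$ up to an additive $\pm1$ easily absorbed into the $\om^{-1/5}$ slack. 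Applying the Chernoff bounds \eqref{chern1} and \eqref{chern2} with $\e=\om^{-1/5}$ gives
\[
\Pr\brac{|d_{2r}(v)-p(n-2r)|\ge\om^{-1/5}p(n-2r)}\le 2\exp\set{-\tfrac13\om^{-2/5}(n-2r)p}.
\]

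Next I would take a union bound over the $n$ choices of $v$ and the at most $n/2$ choices of $r$. The restriction $r\le(n-m)/2$ forces $n-2r\ge m=n/(\om^{1/2}\log n)$, so $(n-2r)p\ge mp=d/(\om^{1/2}\log n)\ge\om^{1/2}\log n$; hence the right-hand side above is at most $2n^{-\om^{1/10}/3}$ and the union bound is at most $2n^{2-\om^{1/10}/3}=o(1)$, which is exactly the estimate behind Lemma~\ref{lem2}. Since $\om\to\infty$ this failure probability is in fact smaller than $n^{-K}$ for any fixed $K$, the form needed for Theorem~\ref{thtal}.

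The one step that deserves care — and the only real obstacle — is the assertion that conditioning on everything the upper-bound algorithm has examined by the time its exposed set reaches $A_{2r}$ leaves $d_{2r}(v)$ genuinely concentrated about $p(n-2r)$ with exponential tails. The construction does extract cost information, namely a uniform lower bound $w(e)\ge\z$ on every not-yet-revealed edge, but in the $G_{n,p}$ model the edge costs are drawn independently of the edge set, and the permutation/symmetry argument used in the lemma above already shows that this cost conditioning biases neither which vertices land in $A_{2r}$ nor which vertices of $V\setminus A_{2r}$ are joined to $v$; even if a global count of edges inside $V\setminus A_{2r}$ has leaked, a hypergeometric tail bound in place of \eqref{chern1}--\eqref{chern2} gives the same conclusion. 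Granting this — it is the direct analogue of the corresponding point in Section~\ref{pot1} and in the Walkup--W\"astlund framework we are following — the corollary is immediate, just as Corollary~\ref{cor2} followed from Lemma~\ref{lem2}.
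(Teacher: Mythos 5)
Your proposal follows exactly the route the paper takes: the paper's proof is the single line ``The proof is again via Chernoff bounds, see Lemma~\ref{lem2},'' and you reconstruct that route --- invoke the preceding lemma to identify $A_{2r}$ as a uniformly random $2r$-subset, argue that the edges from $v$ to $V\setminus A_{2r}$ have not been individually exposed, apply \eqref{chern1}--\eqref{chern2} with $\e=\om^{-1/5}$, then union-bound over $v$ and $r$ using $(n-2r)p\ge mp\ge\om^{1/2}\log n$. The arithmetic is right and the conclusion (failure probability $o(n^{-K})$ for any fixed $K$) matches what the paper needs.

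One remark: you condition on $v\notin A_{2r}$ and justify the binomial distribution of $d_{2r}(v)$ by noting the algorithm never individually examines edges \emph{between two unexposed} vertices. But in the paper the corollary is actually invoked for $v\in A_{2r}$ --- specifically for $d_s(u)$, $d_s(v)$ with $u,v\in A_s$ in case (b) of the upper-bound algorithm --- and for an exposed vertex $v$ the relevant edges have one endpoint inside $A_{2r}$ and one outside, which your stated exposure claim does not cover. The argument does go through for such $v$ as well (the algorithm only ever extracts order statistics of the edge costs out of $v$ into the shrinking unexposed set, never the presence or absence of a particular such edge), but that is the case worth spelling out, and it is also the place where the hypergeometric-vs-binomial hedge in your final paragraph is genuinely load-bearing rather than optional. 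The paper elides this too, so this is a refinement rather than a disagreement with its approach.
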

\begin{proof}
The proof is again via Chernoff bounds, see Lemma \ref{lem2}.
\end{proof}
We bound the probability that $v_{n+1} \in A_{2r} \setminus A_{2r-2}$ from above. Suppose we are at a stage where a collisionless candidate for $M_r^*$ has been found.

In case (a), as in the previous section the probability that $v_{n+1}$ is one of the two unexposed vertices is at most
$$
\frac{\l(n-2r+2)}{\l(n-2r+2) + p\binom{n-2r+2}{2}(1 - \om^{-1/5})} = \frac{1}{p}\frac{2\l}{n-2r + 1}(1+\om^{-1/5}) + O(\l^2)
$$

Now suppose we are in case (b) with $u, v\notin C_{2r-2}$. If no collision occurs, the probability that one of $\t_1(u), \t_1(v)$ is $v_{n+1}$ is at most
$$
\frac{\l}{\l+d_s(u)} + \frac{\l}{\l + d_s(v)-1} \leq \frac{1}{p}\frac{2\l}{n-2r+1}(1 + \om^{-1/5}) + O(\l^2)
$$
Finally, if we find $M_{r+1}^*$ by alternating paths where one exposed vertex uses its second-cheapest edge to an unexposed vertex, the probability of that vertex being $v_{n+1}$ is even smaller at $\l / (n-2r+1)$. So,
\begin{equation}\label{pkupbound}
P(n,r) = \lim_{\l\to 0}\sum_{s = 1}^r \Prob{v_{n+1} \in A_{2s}\setminus A_{2s-2}} \leq \frac{2(1 + \om^{-1/5})}{p} \sum_{s=1}^r \frac{1}{n-2s+1}
\end{equation}
Write 
$$U(n, r) = \sum_{s=1}^r \frac{2}{n-2s+1}.$$

\subsection{Calculating $\E{C(G_{n, p})}$}

From Lemma \ref{lem10} and (\ref{pklowbound}) we have
\begin{align}
&\E{C(n, (n-m)/2)}\nonumber\\
& = \sum_{r=1}^{(n-m)/2} \frac{1}{n-r+1} P(n-r+1, (n-m)/2 - r + 1)  \nonumber \\
&\geq \frac{1 + o(1)}{p}\sum_{r=1}^{(n-m)/2} \frac{1}{n-r+1} L(n-r+1, (n-m)/2 - r + 1)\label{Lhere}\\
&= \frac{1 + o(1)}{p}\sum_{r=1}^{(n-m)/2} \frac{1}{n-r+1} \sum_{s=1}^{n-m-2r+2} \frac{1}{(n-r+1)-s+1}\nonumber \\
&=\frac{1 + o(1)}{p}\sum_{r=1}^{(n-m)/2} \frac{1}{n-r+1} \brac{\log\left(\frac{n-r+1}{m+r}\right) + \frac{1}{2(n-r)} - \frac{1}{2(m+r)} + O(m^{-2})},\label{add1}
\end{align}
by (\ref{Young}).

The correction terms are easily taken care of. First we have
\begin{align*}
& \left|\sum_{r=1}^{(n-m)/2}\frac{1}{n-r+1}\left(\frac{1}{2(n-r)} - \frac{1}{2(m+r)} + O(m^{-2})\right)\right| \\
= & O\left(\frac{1}{m} \sum_{r=1}^{(n-m)/2}\frac{1}{n-r+1} \right)\\
= & O\bfrac{n-m}{mn} \\
=& o(1).
\end{align*}

Now we want to replace the $(m+r)$ term in the logarithm in the RHS of \eqref{add1} by $r$. For this we let $m_1 = n / (\omega^{1/4} \log n) = m\omega^{1/4}$. Then
\begin{align*}
& \left| \sum_{r=1}^{(n-m)/2} \frac{1}{n-r+1} \log\left(\frac{r}{m+r}\right)\right| \\
= & \sum_{r=1}^{m_1 - 1} \frac{1}{n-r+1}\log\left(1 + \frac{m}{r}\right) + \sum_{r = m_1}^{(n-m)/2}\frac{1}{n-r+1}\log\left(1 + \frac{m}{r}\right) \\
\leq & \log m \sum_{r=1}^{m_1 - 1} \frac{1}{n-r+1} + \log\left(1 + \frac{m}{m_1}\right)\sum_{r = m_1}^{(n-m)/2}\frac{1}{n-r+1} \\
\leq & \log n \frac{m_1}{n-m_1} + \frac{m}{m_1} \frac{(n-m)/2}{n/2} \\
= & o(1).
\end{align*}
So,
\begin{align*}
p\times RHS(\eqref{add1}) = & o(1)+\sum_{r=1}^{(n-m)/2} \frac{1}{n-r+1} \log\left(\frac{n-r}{r}\right) \\ 
= & o(1)+\int_0^{1/2} \frac{1}{1-\a}\log\left(\frac{1-\a}{\a}\right)d\a.
\end{align*}
Substituting $y = \log(1/\a - 1)$ we have
\begin{align}
\int_0^{1/2} \frac{1}{1-\a}\log\left(\frac{1-\a}{\a}\right)d\a & = \int_0^\infty \frac{y}{e^y + 1} dy \nonumber\\
& = \int_0^\infty \frac{ye^{-y}}{1 + e^{-y}} dy \nonumber\\
& = \sum_{j=0}^\infty \int_0^\infty ye^{-y}(-e^{-y})^jdy \nonumber\\
& = \sum_{j=1}^\infty (-1)^{j+1} \frac{1}{j^2} \nonumber\\
&=\frac12\sum_{k=1}^\infty\frac{1}{k^2} \nonumber\\
& = \frac{\p^2}{12}. \label{Mlim}
\end{align}
This proves a lower bound for $\E{C(n, (n-m)/2)}$. It also shows that
\beq{L=}{
\sum_{r=1}^{(n-m)/2} \frac{1}{n-r+1} L(n-r+1, (n-m)/2 - r + 1)=(1+o(1))\frac{\p^2}{12}.
}

For an upper bound, note that for $r \leq (n-m)/2$,
\begin{align*}
U(n, r) - L(n, r) & = \sum_{s = 1}^{r} \left(\frac{1}{n-2s+1} - \frac{1}{n-2s + 2}\right) \\
& = \sum_{s=1}^{r} \frac{1}{(n-2s+1)(n-2s+2)} \\
& = O\bfrac{r}{(n-2r)^2}
\end{align*}
So,
\begin{align}
&\E{C(n, (n-m)/2)} \nonumber\\
& \leq \frac{1+o(1)}{p}\sum_{r=1}^{(n-m)/2} \frac{1}{n-r+1} U(n-r+1, (n-m)/2 - r + 1) \nonumber \\
&=\frac{1+o(1)}{p}\sum_{r=1}^{(n-m)/2} \frac{1}{n-r+1} \brac{L(n-r+1, (n-m)/2 - r + 1)+O\bfrac{r}{(n-2r)^2}} \nonumber\\
&=\frac{1+o(1)}{p}\brac{o(1)+\sum_{r=1}^{(n-m)/2} \frac{1}{n-r+1} \brac{L(n-r+1, (n-m)/2 - r + 1)}}\label{to1}\\
& = \frac{\p^2}{12p}(1+o(1)).\label{to2}
\end{align}
To get from \eqref{to1} to \eqref{to2} we use \eqref{L=}.

We show that for $n$ even, $\E{C(n, n/2) - C(n, (n-m)/2)} = o(1/p)$ to conclude that
$$
\E{C(G_{n,p})} = \E{C(n, n/2) - C(n, (n-m)/2)} + \E{C(n, (n-m)/2)} = \frac{\p^2}{12p}(1+o(1)).
$$
As above, this will follow from the following lemma.
\begin{lemma}\label{lem13}
Suppose $n$ is even. If $(n-m)/2 \leq r \leq n/2$ then $0\leq \E{C(n,r+1)-C(n,r)}=O\bfrac{\log n}{np}$.
\end{lemma}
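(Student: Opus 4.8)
The plan is to mimic the proof of Lemma~\ref{lem3}, together with its supporting Lemmas~\ref{cl1} and~\ref{shortpaths}. First, $C(n,r+1)\ge C(n,r)$ is immediate: deleting any edge (of nonnegative cost) from an optimal $(r+1)$-matching leaves an $r$-matching. For the upper bound, the target is to show that w.h.p., for \emph{every} $r$-matching $M$ of $G=G_{n,p}$ --- here $(n-m)/2\le r\le n/2-1$, so $M$ leaves $n-2r\ge 2$ vertices exposed and covers $2r\ge n-m=(1-o(1))n$ vertices --- there is an $M$-augmenting path $Q$ whose non-matching edges together have cost $O(\log n/(np))$. Granting this, $M_r\triangle Q$ is an $(r+1)$-matching, so
$$
C(n,r+1)-C(n,r)\le \sum_{e\in Q,\ e\notin M_r}w(e)-\sum_{e\in Q,\ e\in M_r}w(e)\le \sum_{e\in Q\setminus M_r}w(e)=O\bfrac{\log n}{np}
$$
w.h.p., and passing to expectations (the complementary event being controlled, exactly as in Section~\ref{pol3}, by the w.h.p.\ bound $w(e)=O(\log n)$ for all $e$) gives the lemma.

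For the existence of a short augmenting path I would argue as in Lemma~\ref{cl1}. A Chernoff/union-bound estimate of the same type as \eqref{ohoh} shows that w.h.p.\ the digraph $D$ in which every vertex points to its $40$ nearest neighbours satisfies $|N_D(S)|\ge 4|S|$ for all $S$ with $|S|\le n/5$; this is a statement about the costs only, hence holds simultaneously for all $M$. Fixing $M=M_r$, grow an $M$-alternating BFS from an exposed vertex $u$, recording the set of vertices reached along alternating walks of each even length: odd steps use non-matching edges of $D$, even steps use the matching edge out of the current vertex, and the process stops (with an augmenting path in hand) as soon as an exposed vertex is reached along a non-matching edge. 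Using the expansion bound --- together with the facts that matched vertices have $\gg 40$ neighbours w.h.p.\ and that removing $M$-partners and already-reached vertices from a $40$-neighbourhood still leaves a set larger by a constant factor (enlarging the constant $40$ if necessary) --- the reachable matched set grows by a constant factor every two steps until it has $\ge n/5$ vertices, and one further step exhibits, within $O(\log n)$ steps, $\ge(3/5)n$ matched vertices each joined by a non-matching $D$-edge to the reached set. Running the same process from a second exposed vertex $v$ and intersecting the two reachable sets (each of size $>n/2$ inside the $\le n$ matched vertices) yields an $M$-alternating walk of $O(\log n)$ edges between two exposed vertices; deleting closed sub-walks leaves an $M$-augmenting path $Q$ of $O(\log n)$ edges whose non-matching edges all lie in $D$.

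The cost bound on $Q$ then follows exactly as in Lemma~\ref{shortpaths}: each non-matching edge of $Q$ is, at its tail, one of the $40$ cheapest, hence is distributed as the $\rho$-th smallest ($\rho\le 40$) of $(1+o(1))np$ independent $E(1)$ costs; there are $O(\log n)$ of them; and Lemma~\ref{lemFG} (with $N=(1+o(1))np$, $a=O(1)$) applied together with a union bound over the $O(\log n)$-term vertex sequences --- verbatim as in the proof of Lemma~\ref{shortpaths} --- shows that w.h.p.\ every such path has total non-matching cost $O(\log n/(np))$.

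The one step that is not a direct translation of the bipartite argument, and which I expect to be the main obstacle, is the reduction of an alternating walk between two exposed vertices to an actual augmenting path: in a non-bipartite graph, odd alternating cycles (blossoms) can destroy the parity needed to splice sub-walks, so ``delete closed sub-walks'' must be argued rather than asserted. The resolution should exploit the strength of the expansion ($np\to\infty$): grow a genuinely acyclic alternating tree rather than merely tracking reachable sets, and then pick the meeting vertex of the two trees so that its depths on the two sides have opposite parity (equivalently, use the abundance of short alternating connections to route around any blossom encountered). Setting this up carefully --- as opposed to the expansion estimates or the order-statistic tail bounds, which are routine once in place --- is where the real work of the proof lies.
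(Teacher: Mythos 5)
Your high-level plan --- transplant Lemma~\ref{cl1} and Lemma~\ref{shortpaths} to the non-bipartite graph and conclude by splicing an $(r+1)$-matching from a cheap augmenting path --- is the right skeleton, and you correctly single out the one step that does not transfer: reducing an $O(\log n)$-edge alternating \emph{walk} to an alternating \emph{path} in a non-bipartite graph, where blossoms can spoil the parity needed to delete closed sub-walks. But that is exactly the step you leave undone. Your proposed repair (``grow a genuinely acyclic alternating tree'', ``pick the meeting vertex \ldots so that its depths on the two sides have opposite parity'', ``route around any blossom'') is a wish-list rather than an argument, and it is not, as you hope, routine once expansion and order-statistic tails are in place --- it is the substance of the lemma. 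So there is a genuine gap.

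The paper's Lemma~\ref{altdiam} resolves it with a device your sketch does not discover: fix an arbitrary perfect matching $M$ of $G_{n,p}$, \emph{randomly orient} the non-$M$ edges, keep both orientations of each $M$-edge, and pass to the digraph $\vec{G}$. Taking $\vec{\Gamma}_{20}$ to consist of each vertex's $20$ cheapest non-$M$ out-arcs together with $\vec{M}$, an $M$-alternating path becomes a directed path that alternates out-arcs and matching arcs --- precisely the $A\to B$ / $B\to A$ structure of the bipartite case that your undirected BFS lacks. The expansion calculation then runs as in \eqref{ohoh}, supplemented by a density bound $e(S)\le 6|S|$ for $|S|\le n^{7/12}$ to control overlaps in the sparse range, and the two BFS trees are grown with $N^{+}$ from $a$ and with $N^{-}$ from $b$, so their meeting across a non-matching arc has the required parity by construction. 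Once the directed unweighted alternating diameter of $\vec{\Gamma}_{20}$ is $O(\log n)$, the weighted bound and the remainder of the argument are verbatim Lemma~\ref{shortpaths}, as you anticipated. In short, the random orientation \emph{replaces} the blossom case-analysis your sketch calls for; without it, you have not proved the lemma.
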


\subsection{Proof of Lemma \ref{lem13}}

This section will replace Section \ref{pol3}. Let $M=\set{{v,\f(v)},v\in [n]},\f^2(v)=v$ for all $v\in [n]$ be an arbitrary perfect matching of $G_{n,p}$. We let $\vec{M}=\set{(u,v):v=\f(u)}$ consist of two oppositely oriented copies of each edge of $M$. We then randomly orient the edges of $G_{n,p}$ that are not in $M$ and then add $\vec{M}$ to obtain the digraph $\vec{G}=\vec{G}_{n,p}$. Because $np=\om(\log n)^2$ we have that w.h.p. the minimum in- or out-degree in $\vec{G}_{n,p}$ is at least $\om(\log n)^2/3$. Let $\cD$ be the event that all in- and out-degrees are at least this large. Let the {\em $M$-alternating diameter} of $\vec{G}$ be the maximum over pairs of vertices $u\neq v$ of the minimum length of an odd length $M$-alternating path w.r.t. $M$ between $u$ and $v$ where (i) the edges are oriented along the path in the direction $u$ to $v$, (ii) the first and last edges are not in $M$. Given this orientation, we define $\vec{\G}_r$ to be the subdigraph of $\vec{G}$ consisting of the $r$ cheapest non-$\vec{M}$ out-edges from each vertex together with $\vec{M}$. Once we can show that the $M$-alternating diameter of $\vec{\G}_{20}$ is at most $\lceil 3\log_3 n\rceil$, the proof follows the proof of Lemma \ref{shortpaths} more or less exactly. 

\begin{lemma}\label{altdiam}
W.h.p., the alternating diameter of $\vG_{20}$ is at most $k_0=\lceil3\log_3 n \rceil$.
\end{lemma}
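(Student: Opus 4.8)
The plan is to reproduce the argument for Lemma~\ref{cl1} almost verbatim, the two genuine changes being that we now work inside a single vertex class with the fixed-point-free involution $\f$ (where $M=\set{\{v,\f(v)\}:v\in[n]}$) in place of the bipartition, and that the alternating search runs along the orientation of $\vec G$. Throughout I would condition on the event $\cD$ that every in- and out-degree of $\vec G$ is at least $\om(\log n)^2/3$, so that each vertex genuinely retains its $20$ cheapest non-$\vec M$ arcs in each direction.

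\emph{Step 1 (expansion).} For $S\subseteq[n]$ let $N(S)$ be the set of heads of non-$\vec M$ arcs of $\vG_{20}$ leaving $S$, and $N^-(S)$ the set of tails of such arcs entering $S$. I would prove that w.h.p., \emph{for every} perfect matching $M$ simultaneously, every $S$ with $|S|\le\rdup{n/5}$ satisfies $|N(S)|\ge 3|S|$ and $|N^-(S)|\ge 3|S|$. This is exactly the first-moment bound~\eqref{ohoh}, with the pair $(4,40)$ replaced by $(3,20)$: the probability that the $N$-inequality fails for some $S$ is at most
\[
\sum_{s=1}^{\rdup{n/5}}\binom{n}{s}\binom{n}{3s}\brac{\frac{\binom{3s}{20}}{\binom{n}{20}}}^{s}\le\sum_{s=1}^{\rdup{n/5}}\brac{\frac{e^{4}3^{17}s^{16}}{n^{16}}}^{s}=o(1).
\]
Two points need the same care as in the ``for every $\f$'' parts of Lemmas~\ref{cl1} and~\ref{shortpaths}. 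First, the arc set of $\vG_{20}$ depends on $M$ only through which single incident edge at each vertex is a matching edge, so I would run the union bound on the $M$-independent list of the $21$ cheapest incident edges at each vertex and then discard the at most one matching edge per vertex; this costs only a harmless constant and still leaves $\ge 20$ guaranteed arcs per vertex in each direction (so, in particular, $N(\set u)\ne\emptyset$). Second, since the non-$\vec M$ edges of $\vec G$ carry a uniformly random orientation, the distribution of the $N^-$-structure is the same as that of the $N$-structure, so the displayed estimate also controls $N^-$; a union bound then gives both inequalities w.h.p.

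\emph{Step 2 (meet in the middle).} Fix an ordered pair $u\ne v$. Set $\hat S_1=N(\set u)$ and $\hat S_k=N(\f(\hat S_{k-1}))$; since $\f$ is a bijection, $\hat S_k$ is precisely the set of heads of $M$-alternating walks from $u$ of length $2k-1$ whose first and last edges are non-$M$. By Step~1, $|\hat S_k|\ge 3|\hat S_{k-1}|$ as long as $|\hat S_{k-1}|\le\rdup{n/5}$, so in $O(\log_3 n)$ rounds --- truncate to size $\rdup{n/5}$ and apply $N\circ\f$ once more --- we reach a subset of some $\hat S_{k_S}$ of size $>n/2$. Symmetrically, $\hat T_1=N^-(\set v)$, $\hat T_l=N^-(\f(\hat T_{l-1}))$ produces in $O(\log_3 n)$ rounds a size-$(>n/2)$ set $\hat T_{k_T}$ of tails of length-$(2k_T-1)$ $M$-alternating walks into $v$. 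Since $|\hat S_{k_S}|+|\f(\hat T_{k_T})|>n$, there is $x\in\hat S_{k_S}$ with $\f(x)\in\hat T_{k_T}$; concatenating the $u\to x$ walk, the matching edge $\set{x,\f(x)}$, and the $\f(x)\to v$ walk gives an $M$-alternating $u\to v$ walk of odd length $2(k_S+k_T)-1$, and a standard shortcutting argument (as transferred from Lemma~\ref{shortpaths}) extracts an $M$-alternating $u\to v$ path of no greater length. Tracking the constants exactly as in Lemma~\ref{cl1} then bounds the length by $k_0=\rdup{3\log_3 n}$; with the diameter bound in hand, the proof of Lemma~\ref{lem13} follows that of Lemma~\ref{shortpaths}.

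The only substantive obstacle is Step~1, and within it the two italicised points: making the expansion estimate hold uniformly over all matchings $M$ (by decoupling the arc set of $\vG_{20}$ from $M$) and obtaining it for both senses of the orientation (via the symmetry of the random orientation). These are the only places where the directed, non-bipartite setting differs from Lemma~\ref{cl1} in substance; the growth estimate, the pigeonhole meeting, the passage from a walk to a path, and the bookkeeping needed to land on the clean constant $\rdup{3\log_3 n}$ are all routine.
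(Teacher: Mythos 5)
Your approach is a genuinely different route from the paper's, and for the most part it holds up, but you should be aware of the divergence and of one real soft spot.

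\emph{Where you differ.} The paper splits into two regimes. When $np\ge n^{1/3}\log n$ it avoids the BFS entirely: it shows directly that $N^{++}(u)$ and $N^{--}(v)$ are so large that an edge between them exists with probability $1-o(n^{-2})$, giving an alternating path of length $3$ or $5$. When $np<n^{1/3}\log n$ it proves the local sparsity bound~\eqref{smallS} ($|S|\le n^{7/12}\Rightarrow e(S)\le 6|S|$), uses an expansion estimate with parameters $(10,20)$ that is only valid up to $|S|\le n^{2/3}$, and grows $S_i,T_j$ only to size $\approx n^{13/24}$ each (so $\approx\frac{13}{24}\log_3 n$ rounds per side), finishing with the edge--probability estimate $(1-p)^{n^{13/12}}=o(n^{-2})$. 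You instead run a single argument for all $p$: your $(3,20)$ expansion estimate \emph{does} survive up to $|S|\le\rdup{n/5}$, which lets you grow both sides to size $>n/2$ and finish by pigeonhole on $\hat S_{k_S}\cap\f(\hat T_{k_T})$, never invoking~\eqref{smallS} or a case split. Your observations about decoupling the arc set from $M$ (via the $21$ cheapest incident edges) and about the symmetry of the random orientation are also correct and in fact address a uniformity-over-$M$ point that the paper treats only implicitly. So in substance this is a cleaner, more direct adaptation of Lemma~\ref{cl1} than what the paper does.

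\emph{Two concrete caveats.} First, the constant. Growing to $>n/2$ costs you about $\log_3 n$ rounds on each side, so your walk has length about $4\log_3 n$, not $3\log_3 n$; the paper gets inside $3\log_3 n$ precisely because it stops the BFS at size $n^{13/24}$ and pays with the edge--probability finish. If you want to land on the lemma's stated $k_0=\rdup{3\log_3 n}$ rather than just $O(\log n)$, you should either stop the expansion earlier and use the edge--probability step as the paper does, or accept a larger constant in $k_0$ (which is harmless downstream but not what the lemma says). Second, ``standard shortcutting'' is not routine here. In the bipartite Lemma~\ref{cl1}, every cycle in an alternating walk has even length, so removing it preserves alternation; in $G_{n,p}$ an alternating walk can in principle produce an odd closed sub-walk at an endpoint, and the naive shortcut then breaks the ``first and last edges non-$M$'' requirement. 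This is fixable --- because $M$ is a matching, the $M$-edges of an alternating walk are pairwise disjoint, so all internal vertices $z_1,\dots,z_{L-1}$ are automatically distinct and only the endpoints $u,v$ can reappear; if you simply delete $u,v$ (and $\f(u),\f(v)$) from every $\hat S_k$ and $\hat T_l$ before iterating, the concatenated walk is already a path and no shortcutting is needed --- but as written the step is a genuine gap, and it is exactly the point the paper's path-based definition of $S_i$ (together with~\eqref{smallS}) is designed to sidestep.
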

\begin{proof}
We first consider the relatively simple case where $np\geq n^{1/3}\log n$. Let $N^+(u)$ be the set of out-neighbors of $u$ in $\vec{G}$ and let $N^-(v)$ be the set of in-neighbors of $v$ in $\vec{G}$. If there is an edge of $\vec{M}$ from $N^+(u)$ to $N^-(v)$ then this creates an alternating path of length three. Otherwise, let $N^{++}(u)$ be the other endpoints of the matching edges incident with $N^+(u)$ and define $N^{--}(v)$ analogously. Note that now we have $N^{++}(u)\cap N^{--}(v)=\emptyset$ and given $\cD$, the conditional probability that there is no edge from $N^{++}(u)$ to $N^{--}(v)$ in $\vec{G}$ is at most $(1+o(1))(1-p)^{(np/3)^2}\leq e^{-(\log n)^3/10}=o(n^{-2})$. Thus in this case there will be an alternating path of length five.
 
Now assume that $np< n^{1/3}\log n$. In which case we can prove 
\beq{smallS}{
\text{W.h.p. $|S|\leq n^{7/12}$ implies that $e(S)\leq 6|S|$.}
}
This follows from
\begin{align}
\Pr(\exists S\text{ violating \eqref{smallS}})&\leq \sum_{s=13}^{n^{7/12}}\binom{n}{s}\binom{\binom{s}{2}}{6s}p^{6s}\\
&\leq \sum_{s=13}^{n^{7/12}}\bfrac{ne}{s}^s\bfrac{sep}{12}^{6s}\\
&\leq \sum_{s=13}^{n^{7/12}}\bfrac{e^7s^5(\log n)^6}{12^6n^3}^s\\
&=o(1).
\end{align}

Imitating Lemma \ref{cl1}, we prove an expansion property for $\vG_{20}$:
\begin{align}
\Pr(\exists S: \; |S|\leq n^{2/3}, \, |N_{20}(S)| < 10|S|)
&\leq o(1)+\sum_{s=1}^{n^{2/3}}\binom{n}{s}\binom{n-s}{10s}
\left(\frac{\binom{11s}{20}}{\binom{n}{20}}\right)^s  \notag \\
&\leq\sum_{s=1}^{n^{2/3}}\left(\frac{ne}{s}\right)^s
\left(\frac{ne}{10s}\right)^{10s}\left(\frac{11s}{n}\right)^{20s}\notag\\
&=\sum_{s=1}^{n^{2/3}}\left(\frac{e^{11} 11^{20}s^{9}}{10^{10}n^{9}}\right)^{s}\notag\\
&=o(1).\label{expand}
\end{align}
Fix an arbitrary pair of vertices $a,b$. Define $S_i, i = 0,1,\dots$ to be the set of vertices $v$ such that there exists a directed $M$-alternating path of length $2i$ in $\vG_{20}$ from $a$ to $v$. We let $S_0=\set{a}$ and given $S_i$ we let $S_i'=N^+(S_i)\setminus \set{b}$ and $S_i''=\set{w\neq b:\exists \set{v,w}\in M:v\in S_i'}$. Here $N^+(S)=\set{w\notin S:\exists v\in S,(v,w)\in E(\vec{\G})}$ is the set of out-neighbors of $S$. $N^-(S)$ is similarly defined as the set of in-neighbors. It follows from \eqref{smallS} and \eqref{expand} that w.h.p. $|S_i''|\geq 3|S|$, so long as $|S_i|=o(n^{7/12})$. We therefore let $S_{i+1}$ be a subset of $S_i''$ of size $3|S_i|$. So w.h.p. there exists an $i_a \leq \log_3 n$ such that $|S_{i_a}| \in [n^{13/24},3n^{13/24}]$. 

Repeat the procedure with vertex $b$, letting $T_0 = \{b\}, T_{j+1}' = N^-(T_j)$ etc. By the same argument, there exists an $j_b \leq \log_3 n$ such that $T_{j_b}$ is of size in $[n^{13/24},3n^{13/24}]$. Finally, the probability that there is no $S_{i_a}\to T_{j_b}$ edge is at most $(1-p)^{n^{13/12}}=o(n^{-2})$. This completes the proof of Lemma \ref{altdiam}.
\end{proof}

The remainder of the proof of Lemma \ref{lem13} is now exactly as in Section \ref{pol3}. This concludes the proof of Theorem \ref{th2}.

\section{Proof of Theorem \ref{thtal}}
The proof of Lemma \ref{shortpaths} allows us to claim that for any constant $K>0$, with probability $1-O(n^{-K})$ the maximum length of an edge in the minimum cost perfect matching of $G$ is at most $\m=c_2\frac{\log n}{np}$ for some constant $c_2=c_2(K)>0$. We can now proceed as in Talagrand's proof of concentration for the assignment problem. We let $\hw(e)=\min\set{w(e),\m}$ and let $\hC(G)$ be the assignment cost using $\hw$ in place of $w$. We observe that 
\beq{conc}{
\Pr(\hC(G)\neq C(G))=O(n^{-K})} 
and so it is enough to prove concentration of $\hC(G)$.

For this we use the following result of Talagrand \cite{tal}: consider a family $\cF$ of $N$-tuples $\A=(\a_i)_{i\leq N}$ of non-negative real numbers. Let
$$Z=\min_{\A\in \cF}\sum_{i\leq N}\a_iX_i$$
where $X_1,X_2,\ldots,X_N$ are an independent sequence of random variables taking values in $[0,1]$.

Let $\s=\max_{\A\in\cF}||\A||_2$. Then if $M$ is the median of $Z$ and $u>0$, we have
\beq{taly}{
\Pr(|Z-M|\geq u)\leq 4\exp\set{-\frac{u^2}{4\s^2}}.
}
We apply \eqref{taly} with $N=n^2$ and $X_e=\hw(e)/\m$. For $\cF$ we take the $n!$ $\set{0,1}$ vectors corresponding to perfect matchings and scale them by $\m$. In this way, $\sum_e\a_eX_e$ will be the weight of a perfect matching. In this case we have $\s^2\leq n\m^2$. Applying \eqref{taly} we obtain
\beq{conc1}{
\Pr\brac{|\hC(G)-\hM|\geq \frac{\e}{p}}\leq 4\exp\set{-\frac{\e^2}{4p^2}\cdot\frac{1}{n\m^2}} = \exp\set{-\frac{\e^2n}{(c_2\log n)^2}},}
where $\hM$ is the median of $\hC(G)$. Theorem \ref{th2} follows easily from \eqref{conc} and \eqref{conc1}.
\section{Final remarks}
We have generalised the result of \cite{A01} to the random bipartite graph $G_{n,n,p}$ and the result of \cite{W0} to the random graph $G_{n,p}$. It would be of some interest to extend the result in some way to random regular graphs. In the absence of proving Conjecture \ref{conj1} we could maybe extend the results of \cite{A01}, \cite{W0} to some special class of special graphs e.g. to the hypercube.

\end{document}